\DeclareFontFamily{OML}{rsfs}{\skewchar\font'177}
\DeclareFontShape{OML}{rsfs}{m}{n}{ <5> <6> rsfs5 <7> <8> <9>
rsfs7 <10> <10.95> <12> <14.4> <17.28> <20.74> <24.88> rsfs10 }{}
\DeclareMathAlphabet{\mathfs}{OML}{rsfs}{m}{n}
\newtheorem{prop}{Proposition}[section]
\newtheorem{thm}[prop]{Theorem}
\newtheorem{lem}[prop]{Lemma}
\newtheorem{cor}[prop]{Corollary}
\newtheorem{conj}[prop]{Conjecture}
\newtheorem{defn}[prop]{Definition}
\newtheorem{clm}[prop]{Claim}
\newcommand{\BE}{{\mathbb{E}}}
\newcommand{\BN}{{\mathbb{N}}}
\newcommand{\BP}{{\mathbb{P}}}
\newcommand{\BR}{{\mathbb{R}}}
\newcommand{\BT}{{\mathbb{T}}}
\newcommand{\BZ}{{\mathbb{Z}}}
\newcommand{\CA}{{\mathcal{A}}}
\newcommand{\CC}{{\mathcal{C}}}
\renewcommand{\CD}{{\mathcal{D}}}
\newcommand{\var}{{\textbf{Var}}}
\newcommand{\cov}{{\textbf{Cov}}}
\newcommand{\ind}{{\mathbbm{1}}}
\renewcommand{\prob}{{\bf P}}
\newcommand{\bae}{\begin{equation}\begin{aligned}}
\newcommand{\eae}{\end{aligned}\end{equation}}
\newcommand{\ev}{\mathbf{E}}
\newcommand{\pr}{\mathbb{P}}
\newcommand{\Z}{\mathbb{Z}}
\newcommand{\om}{{\omega}}
\newcommand{\Om}{{\Omega}}
\newcommand{\ep}{{\epsilon}}
\begin{document}

\numberwithin{equation}{section} \numberwithin{figure}{section}

\title{Concentration estimates for the isoperimetric constant of the super critical percolation cluster }

\author{Eviatar B. Procaccia\footnote{Weizmann Institute of Science}, Ron Rosenthal\footnote{Hebrew University of Jerusalem}}

\maketitle

\begin{abstract}
We consider the Cheeger constant $\phi(n)$ of the giant component of
supercritical bond percolation on $\BZ^d/n\BZ^d$. We show that the
variance of $\phi(n)$ is bounded by $\frac{\xi}{n^d}$, where $\xi$
is a positive constant that depends only on the dimension $d$ and the
percolation parameter.
\end{abstract}

\section{Introduction}

Let $\BT^d(n)$ be the $d$ dimensional torus  with side length $n$,
i.e,  $\BZ^d/n\BZ^d$, and denote by $\BE_d(n)$
 the set of edges of the graph $\BT^d(n)$. Let $p_c(\BZ^d)$ denote the critical value for  bond percolation on $\BZ^d$, and fix some $p_c(\BZ^d)<p\leq 1$.
We apply a $p$-bond Bernoulli percolation process on the torus $\BT^d(n)$ and denote by $C_d(n)$ the largest open component
of the percolated graph (In case of two or more identically sized largest components, choose one by some arbitrary but fixed method). Let $\Om=\Om_n=\{0,1\}^{\BE_d(n)}$ be the space of configurations for the
percolation process and $\prob=\prob_{p}$ is the probability measure associated with the percolation process. For a subset $A\subset C_d(n)(\om)$ we denote by $\partial_{C_d(n)} A$ the boundary of the
set $A$ in $C_d(n)$, i.e, the set of edges $(x,y)\in \BE_d(n)$ such that $\om((x,y))=1$ and with
either $x\in A$ and $y\notin A$ or $x\notin A$ and $y\in A$.
Throughout this paper $c,C$ and $c_i$ denote positive constants which may depend on the
dimension $d$ and the percolation parameter $p$ but not on $n$.
The value of the constants may change from one line to the next.

Next we define the Cheeger constant

\begin{defn}
For a set $\emptyset\neq A\subset C_d(n)$ we denote,
\[
\psi_A=\frac{|\partial_{C_d(n)} A|}{|A|} .\] where $|\cdot|$ denotes
the cardinality of a set. The Cheeger constant of $C_d(n)$ is
defined by:
\[
\phi=\phi(n):=\min_{\begin{array}{ll}&_{\emptyset\neq A\subset C_d(n)}\\&_{|A|\le|C_d(n)|/2}\end{array}}\psi_A.
\]
\end{defn}

In \cite{benjamini2003mixing} Benjamini and Mossel studied the robustness of the mixing time and Cheeger constant of $\Z^d$ under a percolation perturbation. They showed that for $p_c(\Z^d)<p<1$ large enough $n\phi(n)$ is bounded between two constants with high probability. In \cite{mathieu2004isoperimetry}, Mathieu and Remy improved the result and proved the following on the Cheeger constant
\begin{thm}
There exist constants $c_2,c_3,c>0$ such that for every $n\in\BN$
\[
\prob\left(\frac{c_2}{n}\leq \phi(n)\leq \frac{c_3}{n}\right)\geq 1-e^{-c\log^\frac{3}{2}n}.
\]
\end{thm}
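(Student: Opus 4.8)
The estimate is two-sided, and the two inequalities are of very different difficulty. I would obtain the upper bound $\phi(n)\le c_3/n$ from a single explicit test set. Fixing a coordinate and an offset, consider the slab $A=\{x\in C_d(n):0\le x_1<\lfloor n/2\rfloor\}$, with the offset chosen so that $|A|\le|C_d(n)|/2$. Every edge of $\partial_{C_d(n)}A$ sits over one of the two hyperplanes $\{x_1\equiv 0\}$, $\{x_1\equiv \lfloor n/2\rfloor\}$, so deterministically $|\partial_{C_d(n)}A|\le 2n^{d-1}$. Since $|C_d(n)|\ge cn^d$ with the required probability (established below), this gives $\phi(n)\le\psi_A\le 2n^{d-1}/(cn^d)=c_3/n$.

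The lower bound $\phi(n)\ge c_2/n$ is the substance, and the plan is a renormalization that reduces the isoperimetry of the random cluster to the \emph{deterministic} isoperimetry of a torus. Assume $L\mid n$, tile $\BT^d(n)$ by boxes $\{B_i\}_{i\in\BT^d(m)}$ of side $L$ with $m=n/L$, and call $B_i$ \emph{good} when: (i) $B_i$ contains a unique crossing cluster, this cluster lies in $C_d(n)$, and it absorbs every open path in $B_i$ of diameter $\ge L/10$; (ii) this crossing cluster is connected, inside $B_i\cup B_j$, to that of every neighbour $B_j$; and (iii) the crossing cluster of $B_i$, as well as each merged cluster in $B_i\cup B_j$, has edge Cheeger constant at least $c/L$. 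By Grimmett--Marstrand/Pisztora renormalization, (i)--(ii) fail with probability at most $Ce^{-cL}$, while (iii) is a surface-order event whose failure probability is at most $Ce^{-cL^{d-1}}$, via a Peierls/contour count at scale $L$. The decisive structural point is that on the \emph{single} event $\mathcal{G}=\{\text{all }B_i\text{ are good}\}$ the isoperimetric inequality will hold simultaneously for every $A$: there is no union bound over the sets $A$.

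I would then work on $\mathcal{G}$. For $A\subset C_d(n)$ with $|A|\le|C_d(n)|/2$ put $a_i=|A\cap B_i|$, $m_i=|C_d(n)\cap B_i|\asymp L^d$, and $\hat A=\{i:a_i\ge m_i/2\}\subset\BT^d(m)$. Property (iii) yields two lower bounds on $|\partial_{C_d(n)}A|$. First, within each box the boundary of $A\cap B_i$ has at least $(c/L)\min(a_i,m_i-a_i)$ edges, and these are disjoint across boxes, so $|\partial_{C_d(n)}A|\ge (c/L)\,S$ with $S=\sum_i\min(a_i,m_i-a_i)$. Second, if $B_i$ is full ($i\in\hat A$) while an adjacent $B_j$ is not, then both $A$ and its complement occupy at least $cL^d$ vertices of the merged cluster in $B_i\cup B_j$, so the separating cut has at least $c'L^{d-1}$ edges; summing over the edge boundary $\hat\partial\hat A$ of $\hat A$ in $\BT^d(m)$ (with bounded overcounting) gives $|\partial_{C_d(n)}A|\ge c''L^{d-1}\,|\hat\partial\hat A|$. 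The deterministic torus Cheeger inequality supplies $|\hat\partial\hat A|\ge c\,m^{-1}\min(|\hat A|,m^d-|\hat A|)=c\,(L/n)\min(|\hat A|,m^d-|\hat A|)$. A one-line mass count, using $a_i\le\min(a_i,m_i-a_i)+m_i$ on full boxes and the constraint $|A|\le|C_d(n)|/2$ for the complementary case, shows
\[
|A|\le S+L^d\min\bigl(|\hat A|,\,m^d-|\hat A|\bigr).
\]
Hence either $S\ge|A|/2$, whence $|\partial_{C_d(n)}A|\ge (c/2L)|A|\ge (c/2n)|A|$, or $L^d\min(|\hat A|,m^d-|\hat A|)\ge|A|/2$, whence
\[
|\partial_{C_d(n)}A|\ge c''L^{d-1}\cdot c\,\frac{L}{n}\min\bigl(|\hat A|,m^d-|\hat A|\bigr)\ge\frac{c_2}{n}\,|A|.
\]
Minimizing over $A$ gives $\phi(n)\ge c_2/n$ on $\mathcal{G}$; the same event also yields $|C_d(n)|\ge cL^d(1-o(1))m^d\ge cn^d$ used above.

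It remains to choose $L$. A union bound over the $m^d=(n/L)^d$ boxes and their neighbourhoods gives $\prob(\mathcal{G}^c)\le C(n/L)^d(e^{-cL}+e^{-cL^{d-1}})$, the surface-order term being binding. Taking $L$ of order $(\log n)^{3/(2(d-1))}$, so that $L^{d-1}\asymp\log^{3/2}n$, makes the polynomial box count negligible against the large-deviation bound and yields $\prob(\mathcal{G}^c)\le e^{-c\log^{3/2}n}$, the claimed rate. The hard part is exactly Property (iii): proving that the supercritical cluster inside a box of side $L$ has Cheeger constant of order $1/L$ with a surface-order failure probability, and then transferring it so that each renormalized boundary edge is charged $\asymp L^{d-1}$ microscopic boundary edges. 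This transfer is what prevents the loss of a factor $L$ in the Cheeger constant, and it is the box-scale surface-order estimate that both fixes the renormalization scale $L$ and, through the balance above, produces the exponent $3/2$.
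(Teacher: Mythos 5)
First, a point of orientation: the paper does not prove this statement at all --- it is quoted from Mathieu and Remy \cite{mathieu2004isoperimetry} and used as a black box (to control the event $H_n^2$). So you are not competing with an in-paper argument but with the cited source, and your sketch is a reconstruction of essentially the strategy used there and in Benjamini--Mossel: a static renormalization at a polylogarithmic scale $L$, local goodness of boxes, and the two-regime dichotomy (locally unbalanced mass handled by the box-scale isoperimetry, locally full boxes handled by the deterministic Cheeger constant of $\BT^d(n/L)$). The combinatorial core --- the inequality $|A|\le S+L^d\min(|\hat A|,m^d-|\hat A|)$ and the two resulting lower bounds on $|\partial_{C_d(n)}A|$ --- is correct and is exactly the right device for avoiding a union bound over the sets $A$.

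The genuine gap is your property (iii). The assertion that the crossing cluster of a box of side $L$ (and each merged two-box cluster) has Cheeger constant at least $c/L$ with failure probability $Ce^{-cL^{d-1}}$ is not a standard renormalization input on a par with (i)--(ii): it is the theorem itself at scale $L$, and you assume it. Moreover the rate you assign to it is too strong as stated. The Cheeger constant of the box cluster is governed by its worst subset, and the dangerous subsets are of intermediate size: connected pieces of volume $s\in[L,L^d]$ attached to the rest by $o(s/L)$ edges. Controlling all of these simultaneously is an anchored isoperimetry statement, and the available failure probability (cf.\ the bound of \cite{pete2007note} quoted in the paper) is of order $L^d\exp(-\beta L^{(d-1)/d})$, i.e.\ stretched-exponential with exponent $(d-1)/d$, not surface order $L^{d-1}$. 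This does not break the architecture --- you simply take $L$ with $L^{(d-1)/d}\asymp\log^{3/2}n$, which is still polylogarithmic and still beats the $(n/L)^d$ union bound --- but it means the exponent $3/2$ is not produced by the balance you describe, and the one lemma carrying all the probabilistic content is left unproved. Two smaller points: for the upper bound you need the chosen half-slab itself to have volume at least $cn^d$, not merely $|C_d(n)|\ge cn^d$ (this does hold on $\mathcal{G}$ because every good box contributes of order $L^d$ vertices, but it should be said); and the local isoperimetric step must also account for the vertices of $C_d(n)\cap B_i$ that do not lie in the crossing cluster of $B_i$, which is the usual reason one works with enlarged, overlapping boxes.
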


Recently, Marek Biskup and G\'{a}bor Pete brought to our attention that better bounds on the Cheeger constant exist. In \cite{pete2007note} and \cite{berger2008anomalous} it is shown that 
\begin{thm}[\cite{pete2007note}]
For $d\geq 2$ and $p>p_c(\BZ^d)$, there are constants $\alpha(d,p)>0$ and $\beta(d,p)>0$  such that 
\[
\prob\left(\exists S \text{ connected } : 0\in S\subset \mathcal{C}_\infty~,~ M\leq |S|<\infty~,~\frac{|\partial_{\mathcal{C}}S|}{|S|^{(d-1)/d}}\leq \alpha\right)\leq \exp\left(-\beta M^{(d-1)/d}\right)
\]
\end{thm}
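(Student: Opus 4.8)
The plan is to reduce the probabilistic statement to a Peierls-type count over closed dual surfaces, using the edge isoperimetric inequality on $\BZ^d$, and then to defeat the combinatorial entropy of those surfaces by a renormalization argument covering the whole supercritical regime.

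First I would isolate the deterministic input: there is a dimensional constant $c_d>0$ such that every finite $T\subset\BZ^d$ obeys $|\partial_{\BZ^d}T|\ge c_d|T|^{(d-1)/d}$, where $\partial_{\BZ^d}T$ denotes all edges of $\BZ^d$ with exactly one endpoint in $T$, irrespective of their state. Given a candidate $S$ as in the theorem, let $\hat S$ be the union of $S$ with all finite components of $\BZ^d\setminus S$; then $\hat S$ is connected, $\BZ^d\setminus\hat S$ is connected and infinite, $0\in\hat S$, and $|\hat S|\ge|S|\ge M$. The crucial point is that every \emph{open} edge of $\partial_{\BZ^d}\hat S$ has its inner endpoint in $S\subset\mathcal{C}_\infty$ and hence both endpoints in $\mathcal{C}_\infty$, so it lies in $\partial_{\mathcal{C}}S$; thus the number of open edges of $\partial_{\BZ^d}\hat S$ is at most $|\partial_{\mathcal{C}}S|\le\alpha|S|^{(d-1)/d}$. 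Combining with isoperimetry, the number of closed edges of $\partial_{\BZ^d}\hat S$ is at least
\[
c_d|\hat S|^{(d-1)/d}-\alpha|S|^{(d-1)/d}\ge(c_d-\alpha)M^{(d-1)/d}.
\]
Choosing $\alpha<c_d$ and writing $c_4=c_d-\alpha>0$, the event in the theorem is contained in the event that some connected $\hat S\ni0$ with connected infinite complement has at least $c_4 M^{(d-1)/d}$ closed edges on $\partial_{\BZ^d}\hat S$.

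Since $\hat S$ and its complement are both connected, the plaquettes dual to $\partial_{\BZ^d}\hat S$ form a single $*$-connected surface $\Gamma$ separating $0$ from $\infty$, and the edges of $\partial_{\BZ^d}\hat S$ are exactly the edges pierced by $\Gamma$. Our event therefore forces the existence of such a $\Gamma$ with at least $c_4 M^{(d-1)/d}$ of its pierced edges closed. Summing over surfaces, and using that the number of $*$-connected separating surfaces of size $m$ surrounding the origin is at most $\lambda^m$ for a dimensional $\lambda$, while the probability that a prescribed set of $m$ edges is closed equals $(1-p)^m$, gives a bound
\[
\sum_{m\ge c_4 M^{(d-1)/d}}\lambda^m(1-p)^m,
\]
which is of order $\exp(-\beta M^{(d-1)/d})$ as soon as $\lambda(1-p)<1$. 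This already proves the theorem for $p$ close enough to $1$.

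The main obstacle is to remove the restriction $\lambda(1-p)<1$ and reach every $p>p_c(\BZ^d)$: near $p_c$ the contour entropy $\lambda^m$ swamps the energetic gain $(1-p)^m$ and the naive sum diverges. The substantive part of the proof is a renormalization (coarse-graining) scheme of Pisztora--Antal type. I would tile $\BZ^d$ by boxes of a large side $L$, call a box \emph{good} when a suitable enlargement contains a unique crossing cluster which is the local trace of $\mathcal{C}_\infty$ and connects to the crossing cluster of each neighboring box, and use the fact that for $p>p_c$ the good-box field stochastically dominates a Bernoulli field whose parameter tends to $1$ as $L\to\infty$, with bad boxes failing with probability small uniformly in $L$. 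On the renormalized lattice the infinite cluster then behaves like $\BZ^d$ itself, so the lattice isoperimetry transfers to $\mathcal{C}_\infty$ and each block of the rescaled closed surface costs a factor that can be made smaller than the fixed per-block entropy by choosing $L$ large. Rerunning the surface sum at scale $L$, the rescaled surface contains at least $c_5 M^{(d-1)/d}/L^{d-1}$ blocks, the per-block energy now beats the per-block entropy, and one recovers $\exp(-\beta M^{(d-1)/d})$ after absorbing the factor $L^{d-1}$ into $\beta$. The delicate points — where I expect the real work to concentrate — are verifying that the relation ``open boundary edges of $\hat S$ lie in $\partial_{\mathcal{C}}S$'' survives the coarse-graining in the presence of bad boxes, and quantifying the good-box probability sharply enough that the exponent $(d-1)/d$ in $M$ is preserved.
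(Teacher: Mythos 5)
First, a point of orientation: the paper does not prove this statement at all --- it is quoted verbatim from \cite{pete2007note} (and \cite{berger2008anomalous}) and then explicitly not used, so there is no in-paper proof to compare against; the relevant benchmark is Pete's argument. Your first half is sound and is indeed how such proofs begin: filling the holes of $S$ to get $\hat S$ with connected complement, checking that every open edge of $\partial_{\BZ^d}\hat S$ has its inner endpoint in $S$ (correct --- an inner endpoint in a filled hole would force the outer endpoint into the same hole) and hence lies in $\partial_{\mathcal{C}}S$, and concluding that at least $(c_d-\alpha)M^{(d-1)/d}$ boundary edges are closed. Two repairable slips there: the union bound is not $\sum_m\lambda^m(1-p)^m$, since only a prescribed \emph{fraction} of the $m$ pierced edges is required to be closed --- you need $\sum_m(\mathrm{poly})\,\lambda^m\binom{m}{k}(1-p)^k$ with $k\geq(1-\alpha/c_d)m$, which still closes for $p$ near $1$ but only after choosing $\alpha$ small against $c_d$ and $\lambda$; and the $*$-connectedness of the dual plaquette set of $\partial_{\BZ^d}\hat S$ in $d\geq 3$ is a nontrivial lemma (Deuschel--Pistora, Tim\'ar) that should be invoked, not asserted.

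The genuine gap is the renormalization step, which is where the whole difficulty of extending to all $p>p_c(\BZ^d)$ lives, and your description of it would not work as stated. You propose to ``rerun the surface sum at scale $L$'' with ``each block of the rescaled closed surface'' paying a small probability; but a closed edge of $\partial_{\BZ^d}\hat S$ does not make its $L$-block bad, so the blocks traversed by the closed part of the surface carry no small probability at all, and the coarse-grained Peierls sum has nothing to beat the entropy with. The mechanism that actually closes the argument is a dichotomy on the coarse-grained isoperimetric boundary of the blocked-up version of $S$: because the local giant clusters of adjacent \emph{good} blocks are connected to each other (and to $S$), every good block sitting on that coarse boundary is forced to contribute at least one edge to $\partial_{\mathcal{C}}S$; hence if $|\partial_{\mathcal{C}}S|\leq\alpha|S|^{(d-1)/d}$, a positive fraction of the $\gtrsim(M/L^d)^{(d-1)/d}$ coarse boundary blocks must be \emph{bad}, and it is this rare, finitely dependent bad-block process (dominated by a low-density Bernoulli field via Liggett--Schonmann--Stacey) that supplies the per-block factor beating the per-block entropy. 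Without identifying that good blocks are charged to $|\partial_{\mathcal{C}}S|$ while only bad blocks are charged to probability, the coarse-grained sum does not converge for $p$ near $p_c$, and the points you flag as ``delicate'' are in fact the missing proof rather than technical verifications.
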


The improved bounds don't improve our result, thus we kept the original \cite{mathieu2004isoperimetry} bounds in our proofs. 

\begin{conj}
The limit $\lim_{n\rightarrow\infty}{n\phi(n)}$ exists.
\end{conj}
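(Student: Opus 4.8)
The plan is to separate the random fluctuations, which the main theorem already controls, from the deterministic problem of identifying the limit. Since $\var(\phi(n))\le \xi/n^d$ gives $\var(n\phi(n))\le \xi/n^{d-2}$, Chebyshev's inequality yields $\prob\bigl(|n\phi(n)-n\BE[\phi(n)]|>\ep\bigr)\le \xi/(\ep^2 n^{d-2})$ for every $\ep>0$. For $d\ge 3$ this forces $n\phi(n)-n\BE[\phi(n)]\to 0$ in probability for all $n$, and summability along a sparse subsequence such as $n_k=k^2$ upgrades this to almost sure convergence on that subsequence. Thus the conjecture reduces to proving that the deterministic sequence $n\BE[\phi(n)]$ converges. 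For $d=2$ the variance bound only keeps $n\phi(n)$ tight, so an additional (e.g.\ exponential) concentration input would be needed; I therefore focus on $d\ge 3$, where the problem becomes purely one of identifying $\lim_n n\BE[\phi(n)]$.

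To pin down this limit I would appeal to the Wulff-construction theory for supercritical percolation. First, for each direction $u\in\BS^{d-1}$ one defines the percolation surface tension $\tau(u)$ as the limiting normalized minimal open cutset separating the two $u$-facing sides of a large box, the existence of the limit following from a sub-additivity (Fekete) argument, and $\tau$ extends by homogeneity to a norm on $\BR^d$. Rescaling $\BT^d(n)$ by $n$ to the unit torus $\BR^d/\BZ^d$ and using that the giant component has density $\theta_p$, a candidate $A$ occupying rescaled region $\hat A$ satisfies $|A|\approx \theta_p n^d|\hat A|$ and $|\partial_{C_d(n)}A|\approx n^{d-1}\int_{\partial\hat A}\tau(\nu_{\hat A})\,d\mathcal{H}^{d-1}$, so that $n\psi_A$ converges to $\tfrac{1}{\theta_p}\,\bigl(\int_{\partial\hat A}\tau\,d\mathcal{H}^{d-1}\bigr)/|\hat A|$. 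This identifies the candidate limit as the anisotropic Cheeger constant of the torus,
\[
L=\frac{1}{\theta_p}\,\min_{0<|\hat A|\le 1/2}\frac{\int_{\partial\hat A}\tau(\nu_{\hat A})\,d\mathcal{H}^{d-1}}{|\hat A|},
\]
whose minimizer I expect to be a slab cut perpendicular to the cheapest lattice direction (so that $|\hat A|=1/2$), rather than a Wulff droplet, since the volume constraint is a fixed fraction of the whole torus.

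The convergence $n\BE[\phi(n)]\to L$ would then follow from matching bounds. For the upper bound I would discretize the continuum minimizer $\hat A$ into a subset $A_n\subset C_d(n)$ with $|A_n|\le |C_d(n)|/2$ whose boundary is concentrated near the rescaled slab; a law of large numbers for the minimal cutset across a box gives $n\psi_{A_n}\le L+o(1)$ with high probability, hence $\limsup_n n\BE[\phi(n)]\le L$. For the lower bound one must show that \emph{no} admissible set can beat $L$: after a block-renormalization coarse-graining that replaces the microscopic configuration by a well-behaved density-$\theta_p$ field and controls the rare large holes of $C_d(n)$, the discrete boundary $\partial_{C_d(n)}A$ of an arbitrary competitor is bounded below by its anisotropic perimeter, and the continuum isoperimetric inequality on the torus yields $n\psi_A\ge L-o(1)$ uniformly over admissible $A$, giving $\liminf_n n\BE[\phi(n)]\ge L$.

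The main obstacle is this lower bound. Controlling the boundary of an arbitrary, possibly highly irregular, subset $A$ requires the full Wulff machinery: coarse-graining, surface-tension concentration, and a compactness together with lower-semicontinuity argument in the space of sets of finite perimeter, all in the presence of the torus topology, where the optimal cut is a closed hypersurface that may wind around several directions. Establishing that the rescaled near-optimizers are precompact and that every subsequential limit is an admissible competitor for the continuum problem — a shape theorem for the Cheeger optimizers — is the technically hardest point, and is presumably why the statement is recorded here only as a conjecture.
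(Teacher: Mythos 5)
This statement is recorded in the paper as an open conjecture --- the paper offers no proof, and explicitly says so ("Even though the last conjecture is still open\dots"). Your proposal is a plausible research program, but it is not a proof, and you essentially concede this yourself: every step after the Chebyshev reduction is asserted, not established. Concretely, the gaps are: (i) the existence of the direction-dependent surface tension $\tau(u)$ for minimal open cutsets and its extension to a norm is claimed via "a sub-additivity argument" but not carried out (the cutset functional is not obviously subadditive in the form needed, and one must also prove concentration of the cutset size around $n^{d-1}\tau(u)$, not just convergence of its mean); (ii) the identification $|\partial_{C_d(n)}A|\approx n^{d-1}\int_{\partial\hat A}\tau\,d\mathcal{H}^{d-1}$ for the \emph{optimal} discrete set requires exactly the coarse-graining, compactness, and lower-semicontinuity machinery you flag as "the main obstacle" --- i.e.\ the heart of the matter is missing; (iii) the case $d=2$ is abandoned entirely, since the variance bound $\var(n\phi(n))\le\xi/n^{d-2}$ gives no concentration there; and (iv) even granting everything, Chebyshev plus convergence of $n\BE[\phi(n)]$ yields convergence in probability to a constant, while "almost sure convergence" across different $n$ is not even well-defined absent a coupling of the percolation configurations on the tori $\BT^d(n)$, so the mode of convergence in the conjecture needs to be pinned down.

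That said, the strategy you outline --- reduce to the mean via the variance bound, define a surface tension, and match upper and lower bounds against a continuum anisotropic isoperimetric problem weighted by $\theta_p$ --- is the right one; it is essentially the Wulff-construction route by which this conjecture was eventually resolved in later work (in $d=2$ and then in general dimension), and those proofs required precisely the renormalization and shape-theorem technology whose absence is the gap here. One caveat on your candidate limit: you should not presume the minimizer is a slab without proof; whether the continuum optimizer at volume fraction $1/2$ is a slab, a Wulff droplet, or some other constrained shape depends on $\tau$ and the torus geometry, and misidentifying it would break the matching of the upper and lower bounds.
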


Even though the last conjecture is still open, and the
expectation of the Cheeger constant is quite evasive, we managed to
give a good bound on the variance of the Cheeger constant. This is
given in the main Theorem of this paper:

\begin{thm}\label{thm:main}
There exists a constant $\xi=\xi(p,d)>0$ such that
\[
\var(\phi)\le\frac{\xi}{n^d}.
\]
\end{thm}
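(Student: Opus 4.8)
The plan is to bound the variance of $\phi(n)$ via the Efron--Stein inequality (equivalently, a martingale/edge-by-edge decomposition over the independent edge-states). The random variable $\phi = \phi(\omega)$ depends on the percolation configuration $\omega \in \{0,1\}^{\BE_d(n)}$, where the coordinates $\omega(e)$ are i.i.d.\ Bernoulli$(p)$. Since the number of edges is $|\BE_d(n)| = d\,n^d$, an Efron--Stein estimate will read
\[
\var(\phi) \le \frac{1}{2}\sum_{e\in \BE_d(n)} \ev\left[\bigl(\phi(\omega) - \phi(\omega^e)\bigr)^2\right],
\]
where $\omega^e$ is the configuration with the state of edge $e$ resampled independently. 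To obtain the target bound $\xi/n^d$, since the sum has $d\,n^d$ terms, it suffices to show that each per-edge term contributes at most a constant multiple of $n^{-2d}$; that is, resampling a single edge typically changes $\phi$ by at most $O(n^{-d})$, and the rare large deviations are controlled.

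First I would fix $\omega$, let $A^*$ be a minimizing set for $\phi(\omega)$, and estimate $\phi(\omega^e) - \phi(\omega)$ by comparing the optimizer of one configuration against the other. The first observation is the scale: by the Mathieu--Remy bounds (the first displayed Theorem above), with overwhelming probability $\phi(n) \asymp 1/n$ and $|C_d(n)| \asymp n^d$, so the optimal set $A^*$ has $|A^*| \asymp n^d$ and $|\partial_{C_d(n)} A^*| \asymp n^{d-1}$. Changing a single edge $e$ can alter $\psi_{A^*}$ in two ways: the numerator $|\partial A^*|$ can move by at most $1$ (an edge can enter or leave the boundary), and the denominator $|A^*|$ is unaffected as a set except that opening or closing $e$ can change which vertices lie in the giant component $C_d(n)$, hence reshuffle membership. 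On the event that $e$ does not change the identity/size of the giant component (which fails only with probability exponentially small in $n$, by the uniqueness and stability of the supercritical cluster), the change in $\psi_{A^*}$ is of order $|\partial A^*|^{-1}\cdot O(1) / \text{(no change in }|A^*|) $ combined with the $\pm 1$ numerator shift, giving $|\phi(\omega^e)-\phi(\omega)| \le C/|A^*| = O(n^{-d})$. Squaring yields $O(n^{-2d})$ per edge, and summing over $O(n^d)$ edges produces the desired $O(n^{-d})$.

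The main technical obstacle is the control of the denominator and of the rare bad events. When resampling $e$ changes the giant component --- e.g., $e$ is a bridge whose removal disconnects a macroscopic piece, or its addition merges two large components --- the minimizing set and its volume can jump substantially, and $|\phi(\omega^e)-\phi(\omega)|$ need not be small. The plan is to split the per-edge expectation into a ``good'' event $G_e$, on which $C_d(n)(\omega)$ and $C_d(n)(\omega^e)$ share all but $O(1)$ (or $O(\log n)$) vertices and the above Lipschitz estimate applies, and its complement. On $G_e$ we get the $O(n^{-2d})$ bound deterministically. On $G_e^c$ we use the trivial deterministic bound $|\phi(\omega^e)-\phi(\omega)| \le \max_\omega \phi \le C$ (since $\psi_A \le 2d$ always) together with $\prob(G_e^c) \le e^{-c\log^{3/2} n}$, so the complementary contribution is $\sum_e C^2\, e^{-c\log^{3/2}n} = d\,n^d\cdot C^2 e^{-c\log^{3/2}n}$, which is $o(n^{-d})$ and hence absorbed into $\xi/n^d$.

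The delicate point, which I expect to consume the bulk of the work, is making the comparison of optimizers rigorous: one must verify that the minimizer $A^*$ for $\omega$ remains an \emph{admissible competitor} (connected, $\emptyset \ne A \subset C_d(n)(\omega^e)$ with $|A| \le |C_d(n)(\omega^e)|/2$) after resampling, possibly after a bounded surgery near $e$, so that $\phi(\omega^e) \le \psi_{A^*}^{\omega^e} \le \psi_{A^*}^{\omega} + O(n^{-d}) = \phi(\omega) + O(n^{-d})$, and symmetrically with the roles reversed. Controlling the volume constraint $|A| \le |C_d(n)|/2$ under a change in $|C_d(n)|$, and ensuring the competitor stays in the correct half, is where the stability estimates for the supercritical cluster (local uniqueness, bounded surface-to-volume distortion under a single edge flip) enter; quantifying these with the exponential tail of Mathieu--Remy is the heart of the argument.
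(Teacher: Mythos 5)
Your overall architecture coincides with the paper's: the paper invokes Talagrand's $L^1$--$L^2$ inequality but immediately bounds the logarithmic denominator below by $1$, which reduces it to exactly your Efron--Stein/Poincar\'e bound $\var(\phi)\le K\sum_e\|\nabla_e\phi\|_2^2$; it then splits each summand over a good event (on which $|\nabla_e\phi|\le Cn^{-d}$ deterministically) and its complement (probability at most $e^{-c\log^{3/2}n}$, handled by the trivial bound $|\nabla_e\phi|\le 4d$), which is precisely your error budget. So nothing is lost by substituting Efron--Stein for Talagrand here, and the rest of your plan tracks the paper's proof.

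The genuine gap is in the step your per-edge bound leans on most: the claim that the optimizer satisfies $|A^*|\asymp n^d$, which you attribute to ``the Mathieu--Remy bounds (the first displayed Theorem above).'' That theorem only gives $c_2/n\le\phi\le c_3/n$, and from $\psi_{A^*}\le c_3/n$ together with $|\partial_{C_d(n)}A^*|\ge 1$ one can only conclude $|A^*|\ge n/c_3$, which is far from $n^d$; your Lipschitz estimate $|\nabla_e\psi_{A^*}|\le C/|A^*|$ would then only be of order $1/n$, yielding $\var(\phi)=O(n^{d-2})$, which is useless. To get $|A^*|\ge c\,n^d$ with the required probability one needs the stronger isoperimetric inequality $|\partial_{C_d(n)}A|\ge c\,|A|^{(\epsilon-1)/\epsilon}$ for all subsets of the giant cluster (Mathieu--Remy's bound on $I_\epsilon$, or the $(d-1)/d$ bound of Pete and of Berger et al.): any candidate with $|A|\le c_4 n^d$ then has $\psi_A>c_3/n$ and cannot realize the minimum. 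This is the content of the paper's events $H_n^4$ and $H_n^5$, and it must be established for the optimizers of both $\omega$ and $\omega^e$. Relatedly, your assertion that resampling $e$ ``does not change the identity/size of the giant component'' except with exponentially small probability is false as stated: attaching or detaching a small cluster at $e$ happens with probability bounded away from $0$. What is true is that the symmetric difference exceeds $\sqrt{n}$ only with stretched-exponentially small probability; on that event the optimizer of $\omega$ must be split into the part inside $C_d(n)(\omega^e)$ and the detached part, whose own ratio $\psi$ is at least $n^{-1/2}\gg\phi$ and hence cannot carry the minimum --- this is the two-component lemma in the paper and is exactly the ``bounded surgery'' you flag but do not carry out.
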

A major ingredient of the proof is Talagrand's inequality for concentration of measure on product spaces. This inequality is used by Benjamini, Kalai and Schramm in \cite{benjamini2003first} to prove concentration of first passage percolation distance. A related study that uses another inequality by Talagrand is \cite{alon2002concentration}, where Alon, Krivelevich and Vu prove a concentration result for eigenvalues of random symmetric matrices.

\section{The Cheeger constant}
Before turning to the proof of Theorem \ref{thm:main}, we give the following definitions:

\begin{defn}\label{def:om^e}
For a function $f:\Om\rightarrow\BR$ and an edge $e\in \BE_d(n)$ we define $\nabla_e f:\Om\rightarrow\BR$ by
\[
\nabla_e f(\om)=f(\om)-f(\om^e)
\]
where
\[
\omega^{e}(e')=\begin{cases}
\omega(e') & \,\, e'\neq e\\
1-\omega(e') & \,\, e'=e\end{cases}.
\]
In addition, for a configuration $\om\in\Om$ and an edge $e\in
\BE_d(n)$, let $\hat{\om}^{e}=\min\{\om, \om^e\}$ and
$\check{\om}^e=\max\{\om,\om^e\}$.
\end{defn}

\begin{defn}
For $n\in\BN$ we define the following events:
\begin{equation}\begin{aligned}
H^1_n(c_1)&=\left\{\om\in\Om ~:~|C_d(n)(\om)|>c_1 n^d\right\}\\
H^2_n(c_2,c_3)&=\left\{\om\in\Om ~:~\frac{c_2}{n}<\phi(n)(\om)<\frac{c_3}{n}\right\}\\
H^3_n&=\left\{\om\in\Om ~:~\forall e\in E_d(n)\quad
\left|C_d(n)(\om)\triangle
C_d(n)(\om^e)\right|\leq\sqrt{n}\right\}\\
H^4_n(c_4)&=\{\om\in\Om ~:~\exists A:|A|>c_4 n^d,\psi_A(\om)=\phi(n)(\om)\}\\
H^5_n(c_5)&=\{\om\in\Om ~:~\exists A:|A|>c_5
n^d,\psi_A(\om^e)=\phi(n)(\om^e)\}\\
\end{aligned},\end{equation}
and
\begin{equation}\begin{aligned}\label{eq:hndef}
H_n &=H_n(c_1,c_2,c_3,c_4,c_5)=H_n^1(c_1)\cap H_n^2(c_2,c_3)\cap
H_n^3\cap H_n^4(c_4)\cap H_n^5(c_5).
\end{aligned}\end{equation}
\end{defn}

We start with the following deterministic claim:

\begin{clm}\label{clm:nabphi}
Given $c_1,c_2,c_3,c_4,c_5>0$, there exists a constant
$C=C(c_1,c_2,c_3,c_4,c_5,d,p)>0$ such that if $\om\in
H_n(c_1,c_2,c_3,c_4,c_5)$ then for every $e\in\BE_d(n)$
\[
|\nabla_e\phi(\om)|\le\frac{C}{n^d} .\]
\end{clm}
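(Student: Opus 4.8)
The plan is to bound $\nabla_e\phi(\om)=\phi(\om)-\phi(\om^e)$ from above and below separately, each time manufacturing an admissible competitor for one configuration out of a known minimiser for the other. Write $C=C_d(n)(\om)$, $C'=C_d(n)(\om^e)$ and $D=C\triangle C'$; by $H_n^3$ we have $|D|\le\sqrt n$. Everything hinges on a purely structural observation valid on $H_n^3$: flipping $e$ only merges or splits two clusters, and since the giant cannot jump to an unrelated component without making $|D|$ of order $n^d$, the sets $C,C'$ must be nested, the smaller obtained from the larger by deleting $D$; moreover, in whichever of $\om,\om^e$ the edge $e$ is open, $e$ is the \emph{unique} open edge joining $D$ to the rest of the giant. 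I will use $H_n^4,H_n^5$ to fix minimisers $A$ of $\psi_{\cdot}(\om)$ and $B$ of $\psi_{\cdot}(\om^e)$ with $|A|>c_4n^d$, $|B|>c_5n^d$, and $H_n^1,H_n^2$ to guarantee $|C|>c_1n^d$ and $\phi(\om)<c_3/n$. (For the finitely many small $n$ the claim is trivial, since $\psi_A\le 2d$ forces $\phi\le 2d\le C/n^d$ once $C$ is large.)

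Consider first the direction where the minimiser must be trimmed, say bounding $\phi(\om^e)$ in the sub-case $C'=C\setminus D$ (the other sub-case is symmetric). Put $\tilde A=A\cap C'=A\setminus D$, an admissible set for $\om^e$ with $|\tilde A|\ge|A|-\sqrt n$. Because $C'\subset C$ and the two configurations share every open edge except $e$, each edge of $\partial_{C'}\tilde A$ is an $\om$-open edge with one end in $A$ and the other in $C\setminus A$, hence lies in $\partial_{C}A$; thus $|\partial_{C'}\tilde A|\le|\partial_{C}A|=\phi(\om)\,|A|$. This is the decisive point: the boundary can only shed the edges buried inside $D$, never acquire new ones, so the $\sqrt n$ is kept out of the numerator. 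Consequently
\[
\phi(\om^e)\le\psi_{\tilde A}(\om^e)\le\frac{\phi(\om)\,|A|}{|A|-\sqrt n}=\phi(\om)+\phi(\om)\frac{\sqrt n}{|A|-\sqrt n}\le\phi(\om)+\frac{C}{n^{\,d+1/2}},
\]
using $\phi(\om)<c_3/n$ and $|A|>c_4n^d$. In the complementary sub-case $C'=C\cup D$ we have $A\subset C'$ already, no trimming is needed, and the single-edge attachment of $D$ shows $\partial_{C'}A$ exceeds $\partial_C A$ by at most $e$, so $\phi(\om^e)\le\phi(\om)+1/|A|\le\phi(\om)+C/n^d$. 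Either way $\phi(\om^e)-\phi(\om)\le C/n^d$; feeding $B$ into $\om$ symmetrically, and using the bound $\phi(\om^e)\le c_3'/n$ just obtained, gives $\phi(\om)-\phi(\om^e)\le C/n^d$ as well.

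The only genuine technicality is the admissibility constraint $|\cdot|\le|C_d(n)|/2$. The trimmed set can overshoot half of $|C'|$, but only by at most $\sqrt n/2$, since $|\tilde A|\le|A|\le|C|/2\le|C'|/2+\sqrt n/2$. When this occurs I replace $\tilde A$ by its complement $C'\setminus\tilde A$: it has the identical edge boundary and size at least $|C'|/2-\sqrt n/2\gtrsim n^d$, so because the set sits essentially at the half-way mark its complement is still of order $n^d$ and dividing the same $O(\phi(\om)\,|A|)$ boundary by it costs only a further factor $1+O(\sqrt n/n^d)$, leaving the estimate intact. Combining the two one-sided bounds yields $|\nabla_e\phi(\om)|\le C/n^d$.

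I expect the main obstacle to be exactly this boundary bookkeeping in the trimming step. The naive estimate — that removing up to $\sqrt n$ vertices of $D$ disturbs the edge boundary by $O(\sqrt n)$ — only delivers $C\sqrt n/n^d$, which is far too weak. The argument works solely because the single-edge attachment of $D$ forces the boundary change to be one-sided (it may fall, but cannot rise beyond the contribution of $e$), so that the unavoidable $\sqrt n$ enters only through the denominator $|A|\sim n^d$ and is damped still further by the factor $\phi(\om)\sim1/n$. Carefully verifying this one-sidedness across each of the four direction-times-sub-case combinations, together with the complement fix for the size constraint, is where the real work lies.
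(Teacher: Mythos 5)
Your argument is correct, and it reaches the same destination as the paper by a recognizably similar but not identical route. The common core is the competitor $\tilde A=A\cap C_d(n)(\om^e)$ built from the large minimiser supplied by $H_n^4$ (resp.\ $H_n^5$ in the reverse direction); this is exactly the set the paper calls $A_1$. Where you differ is in how the quality of that competitor is certified. The paper decomposes $A=A_1\cup A_2$, invokes Lemma \ref{lem:twocom} to get $\psi_A\ge\min\{\psi_{A_1},\psi_{A_2}\}-2/|A|$, and then must argue separately (via $\psi_{A_2}\ge 1/\sqrt n$ against $\phi\le c_3/n$ from $H_n^2$) that the minimum is attained at $A_1$; only then does it conclude $\psi_{A_1}(\om)\le\phi(\om)+2/|A|$ and pass to $\om^e$ by monotonicity of the boundary. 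You bypass Lemma \ref{lem:twocom} entirely with the single inequality $|\partial_{C'}\tilde A|\le|\partial_C A|$, which is the same one-sidedness the paper exploits but applied directly to numerator and denominator; this yields the slightly sharper $\phi(\om^e)\le\phi(\om)\,|A|/(|A|-\sqrt n)$ and makes the role of $H_n^2$ (damping the $\sqrt n$ loss by the factor $\phi(\om)\sim 1/n$) more transparent. Your organization by nesting direction and edge status collapses the paper's six-case table into essentially two sub-cases plus their reverses, and the merge sub-case reproduces the paper's Lemma \ref{lem:isobigset}. One genuine addition on your side: you check admissibility $|\tilde A|\le|C_d(n)(\om^e)|/2$ and repair the possible overshoot by passing to the complement; the paper silently assumes $A_1$ remains admissible when it writes $\phi(\om^e)\le\psi_{A_1}(\om^e)$, so your fix closes a small gap in the published argument rather than introducing an unnecessary complication.
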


In order to prove Claim \ref{clm:nabphi} we will need the following two lemmas:

\begin{lem}\label{lem:isobigset}
Fix a configuration $\om\in\Om$ and an edge $e\in \BE_d(n)$. Let
$A\subset C_d(n)(\hat{\om}^e)$ be a subset such that $|A|=\alpha
n^d$. Then
\[
|\nabla_e\psi_A|\leq \frac{1}{\alpha n^d}.
\]
\end{lem}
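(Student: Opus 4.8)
The plan is to exploit that $A$ is a \emph{fixed} vertex set, so that the denominator in $\psi_A=\frac{|\partial_{C_d(n)}A|}{|A|}$ equals $\alpha n^d$ in \emph{both} configurations $\om$ and $\om^e$, and only the numerator can change. Writing $B(\om)$ for the number of open edges (in the configuration $\om$) having exactly one endpoint in $A$, so that $\psi_A(\om)=B(\om)/(\alpha n^d)$, I would record the identity
\[
\nabla_e\psi_A=\psi_A(\om)-\psi_A(\om^e)=\frac{B(\om)-B(\om^e)}{\alpha n^d}.
\]
It then suffices to prove that $B(\om)$ and $B(\om^e)$ differ by at most one.

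The core of the argument is purely combinatorial and rests on the single-edge difference between $\om$ and $\om^e$. Since these configurations agree on every edge other than $e$, an edge $e'\neq e$ has exactly one endpoint in $A$ and is open in $\om$ if and only if the same holds in $\om^e$; hence $e'$ contributes to $B(\om)$ precisely when it contributes to $B(\om^e)$. The two counts can therefore differ only through the edge $e$ itself. I would split into cases on the location of $e$: if both endpoints of $e$ lie in $A$, or both lie outside $A$, then $e$ is never a boundary edge of $A$, so $B(\om)=B(\om^e)$ and $\nabla_e\psi_A=0$; if $e$ has exactly one endpoint in $A$, then flipping its state inserts or deletes $e$ from the boundary, changing the count by exactly one. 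In all cases $B(\om)-B(\om^e)\in\{-1,0,1\}$, and dividing by $|A|=\alpha n^d$ gives $|\nabla_e\psi_A|\le\frac{1}{\alpha n^d}$.

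There is no genuine obstacle here; the only point deserving a word is the admissibility of $A$. Because $\partial_{C_d(n)}A$ is, by definition, just the set of open edges with one endpoint in the prescribed set $A$, the quantity $\psi_A(\om)$ is meaningful for the fixed $A$ in any configuration, and the hypothesis $A\subset C_d(n)(\hat{\om}^e)$ with $|A|=\alpha n^d>0$ simply guarantees that $A$ is a nonempty set of the stated size lying in the cluster when $e$ is closed. Opening $e$ only merges components, so the vertices of $C_d(n)(\hat{\om}^e)$ stay mutually connected in $\check{\om}^e$; thus $A$ is evaluated against the same fixed vertex set on both sides, which is exactly what forces the denominators to cancel. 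The substantive content of the lemma is therefore the single-edge stability of the boundary, and no finer structural information about the giant component is required.
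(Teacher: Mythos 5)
Your proof is correct and follows essentially the same route as the paper's: hold the vertex set $A$ fixed so the denominator $|A|=\alpha n^d$ is unchanged, observe that flipping the single edge $e$ alters the open boundary count by at most one, and divide. The only difference is that you make the case analysis on the location of $e$ relative to $A$ explicit, which the paper leaves implicit.
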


\begin{proof}
Since $A$ is a subset of $C_d(n)(\hat{\om}^e)$ it follows that the
size of $A$ doesn't change between the configurations $\hat{\om}^e$
and $\check{\om}^e$ and the size of $\partial_{C_d(n)} A$ is changed
by at most $1$. It therefore follows that
\begin{equation}
\begin{aligned}
|\nabla_e \psi(A)|&=|\psi_A(\omega)-\psi_A(\om^e)|=|\psi_A (\hat{\om}^e)-\psi_A (\check{\om}^e)|\\
&\leq \left|\frac{|\partial A|}{|A|}-\frac{|\partial A|+
1}{|A|}\right|=\frac{1}{|A|}.
\end{aligned}
\end{equation}
\end{proof}

\begin{lem}\label{lem:twocom}
Let $G$ be a finite graph, and let $A,B\subset G$ be disjoint such
that there exists a unique edge $e=(x,y)$, such that $x\in A$ and $y\in B$, then
\[
\psi_{A\cup B}\ge\min\{\psi_A,\psi_B\}-\frac{2}{|A|+|B|} .\]
\end{lem}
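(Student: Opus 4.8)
The plan is to compute $\psi_{A\cup B}$ explicitly in terms of $\psi_A$ and $\psi_B$ and then invoke the elementary mediant inequality. Since $A$ and $B$ are disjoint, the denominator is immediate: $|A\cup B|=|A|+|B|$. All the content lies in counting the edge boundary $\partial(A\cup B)$, which I would do by carefully classifying boundary edges.

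First I would decompose $\partial A=\{e\}\sqcup\partial_{\mathrm{ext}}A$, where $\partial_{\mathrm{ext}}A$ denotes the edges joining $A$ to $G\setminus(A\cup B)$. This is legitimate because every edge leaving $A$ either ends in $B$ — and by the uniqueness hypothesis the only such edge is $e$ — or ends outside $A\cup B$. The analogous decomposition $\partial B=\{e\}\sqcup\partial_{\mathrm{ext}}B$ holds for $B$. Because the edges in $\partial_{\mathrm{ext}}A$ have an endpoint in $A$ while those in $\partial_{\mathrm{ext}}B$ have an endpoint in $B$, and $A\cap B=\emptyset$, these two families are disjoint. Moreover $\partial(A\cup B)=\partial_{\mathrm{ext}}A\sqcup\partial_{\mathrm{ext}}B$: the edge $e$ now has both endpoints inside $A\cup B$ and so drops out, whereas every other boundary edge of $A$ or of $B$ leaves $A\cup B$. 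This gives the key identity
\[
|\partial(A\cup B)|=|\partial A|+|\partial B|-2 .
\]

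Combining the two computations yields
\[
\psi_{A\cup B}=\frac{|\partial A|+|\partial B|-2}{|A|+|B|}=\frac{|\partial A|+|\partial B|}{|A|+|B|}-\frac{2}{|A|+|B|},
\]
so it remains only to show that the mediant $\frac{|\partial A|+|\partial B|}{|A|+|B|}$ is at least $\min\{\psi_A,\psi_B\}$. Assuming without loss of generality that $\psi_A\le\psi_B$, i.e.\ $|\partial A|\,|B|\le|\partial B|\,|A|$, a one-line cross-multiplication gives $\frac{|\partial A|+|\partial B|}{|A|+|B|}\ge\frac{|\partial A|}{|A|}=\psi_A=\min\{\psi_A,\psi_B\}$, which closes the argument.

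The proof is essentially bookkeeping, and the only step that requires genuine care is the boundary identity: one must use the uniqueness of $e$ to guarantee that no boundary edge of $A$ other than $e$ can terminate in $B$, and the disjointness of $A$ and $B$ to guarantee that the external boundaries $\partial_{\mathrm{ext}}A$ and $\partial_{\mathrm{ext}}B$ do not overlap. Once this accounting is secured, the $-2$ is absorbed exactly by the $-\frac{2}{|A|+|B|}$ in the statement, and the rest is the standard fact that a mediant lies between the two fractions that generate it.
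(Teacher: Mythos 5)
Your proof is correct and follows exactly the paper's argument: the boundary identity $|\partial(A\cup B)|=|\partial A|+|\partial B|-2$ followed by the mediant inequality. You simply spell out in more detail the edge-counting and the cross-multiplication that the paper compresses into a single displayed line.
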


\begin{proof}
From the assumptions on $A$ and $B$ it follows that
\begin{equation}
\begin{aligned}
\psi_{A\cup B}=\frac{|\partial(A\cup B)|}{|A\cup B|}=\frac{|\partial
A|+|\partial B|-2}{|A|+|B|}\geq \min\left\{\frac{|\partial
A|}{|A|},\frac{|\partial B|}{|B|}\right\}-\frac{2}{|A|+|B|},
\end{aligned}
\end{equation}
and so the lemma follows.
\end{proof}

\begin{proof}[Proof of Claim \ref{clm:nabphi}]
We separate the proof into six different cases according to the
following table:

\begin{center}
\begin{tabular}{|c|c|c|}\hline
e=(x,y) & $\begin{array}{c}
\omega(e)=0\\
(\omega=\hat{\omega}^{e})\end{array}$ & $\begin{array}{c}
\omega(e)=1\\
(\omega=\check{\omega}^{e})\end{array}$ \\\hline $x,y\notin C_d(n)$
& 1 & 2 \\\hline $x,y\in C_d(n)$ & 3 & 4 \\\hline $\begin{array}{c}
x\in C_{d}(n)\,,\, y\notin C_{d}(n)\\
\mbox{or}\\
y\in C_{d}(n)\,,\, x\notin C_{d}(n)\end{array}$ & 5 & 6 \\\hline
\end{tabular}
\end{center}

\begin{figure}
  \centering
  \subfloat[Case 1]{\label{fig:case1}\includegraphics[width=0.36\textwidth]{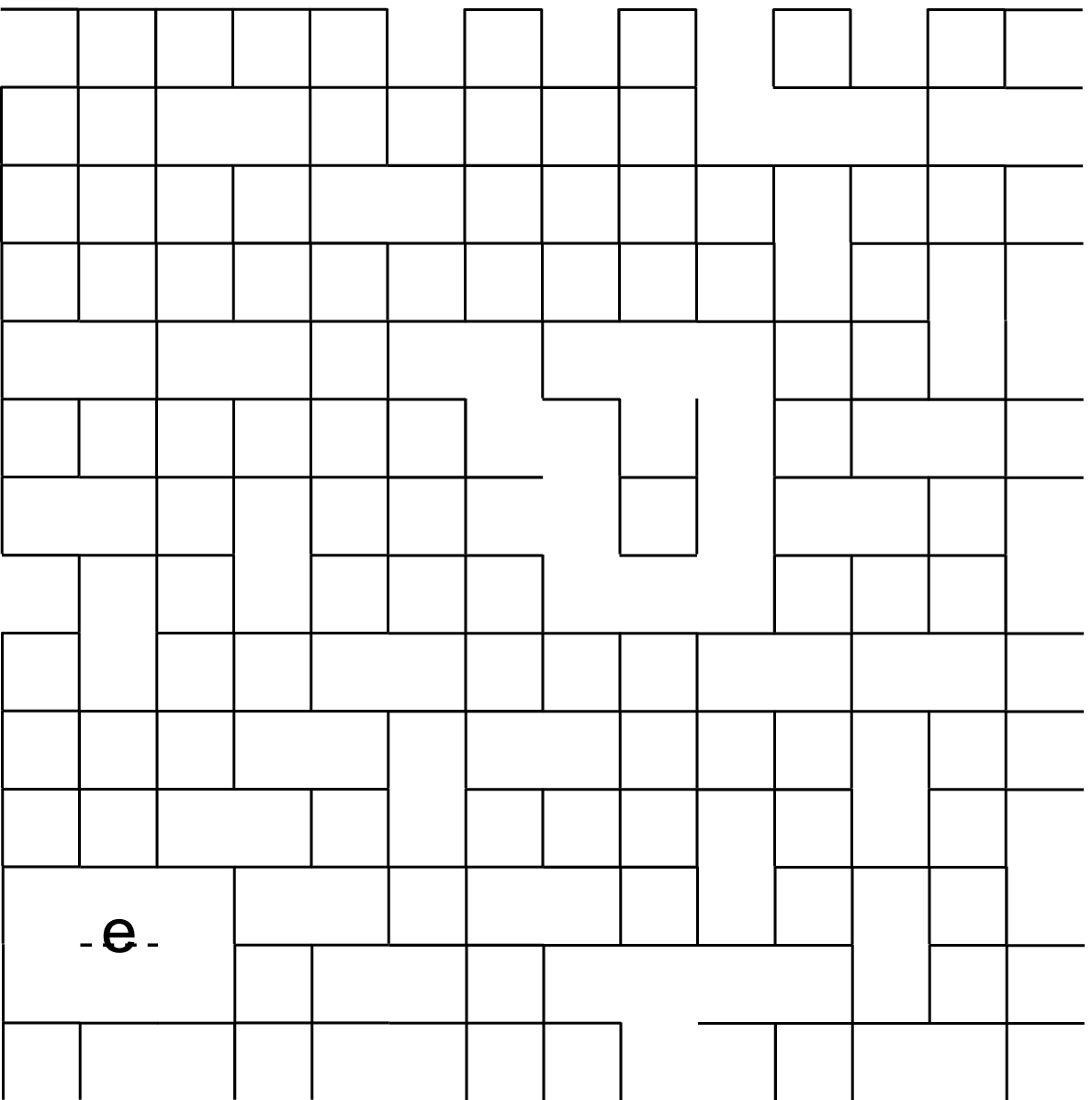}}\qquad
  \subfloat[Case 2]{\label{fig:case2}\includegraphics[width=0.36\textwidth]{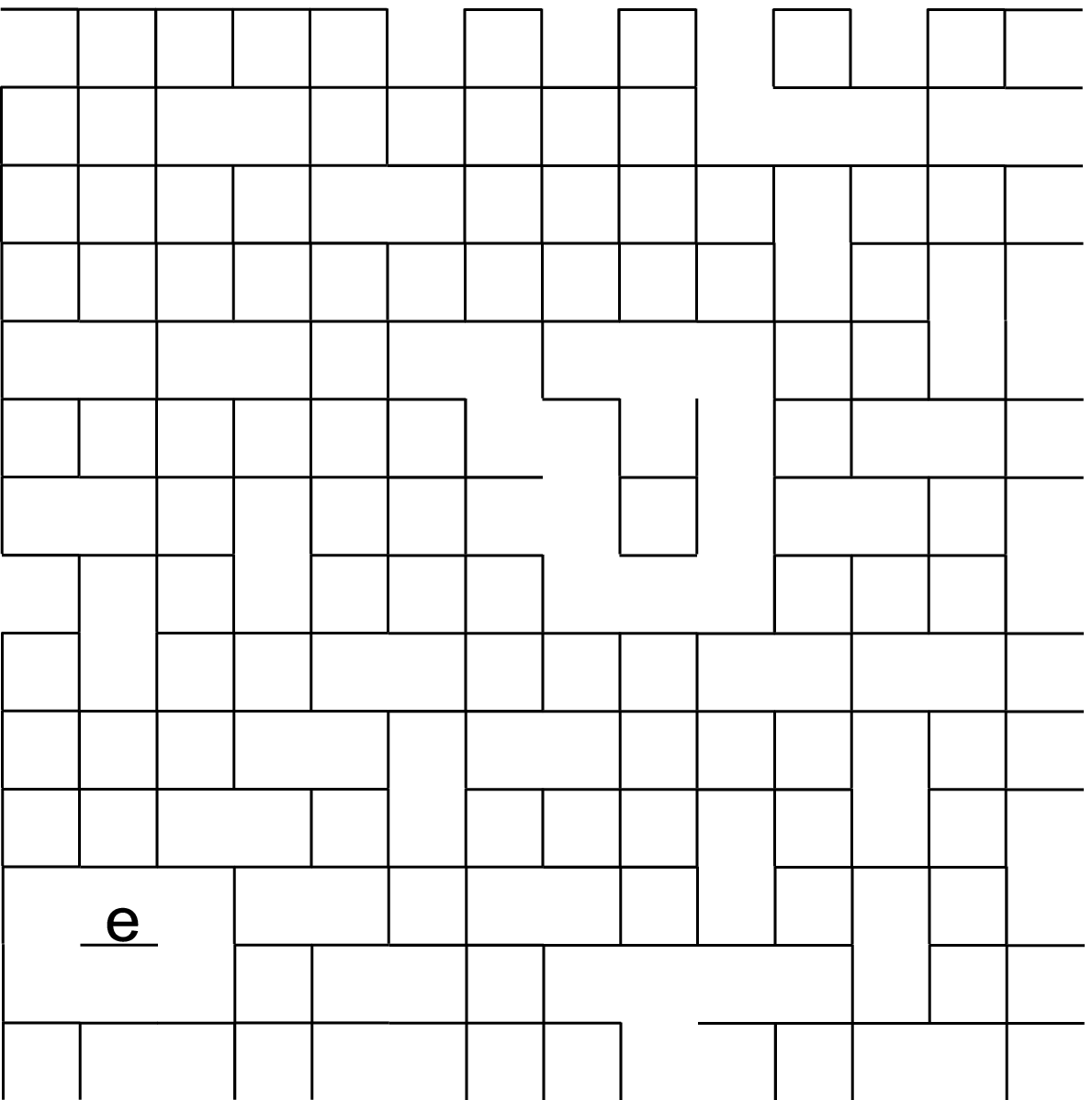}}\\
  \subfloat[Case 3]{\label{fig:case3}\includegraphics[width=0.36\textwidth]{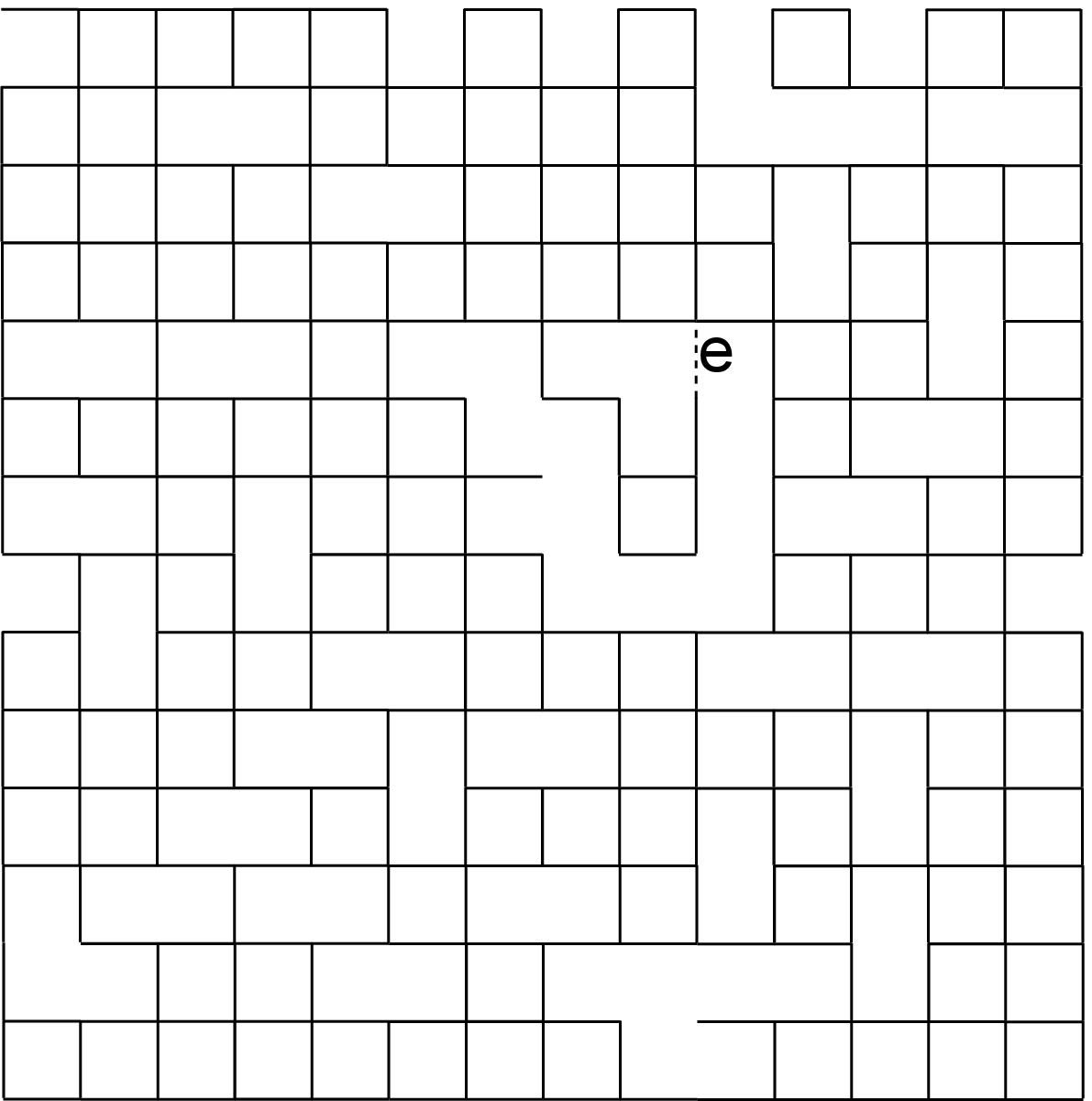}}\qquad
  \subfloat[Case 4a]{\label{fig:case4a}\includegraphics[width=0.36\textwidth]{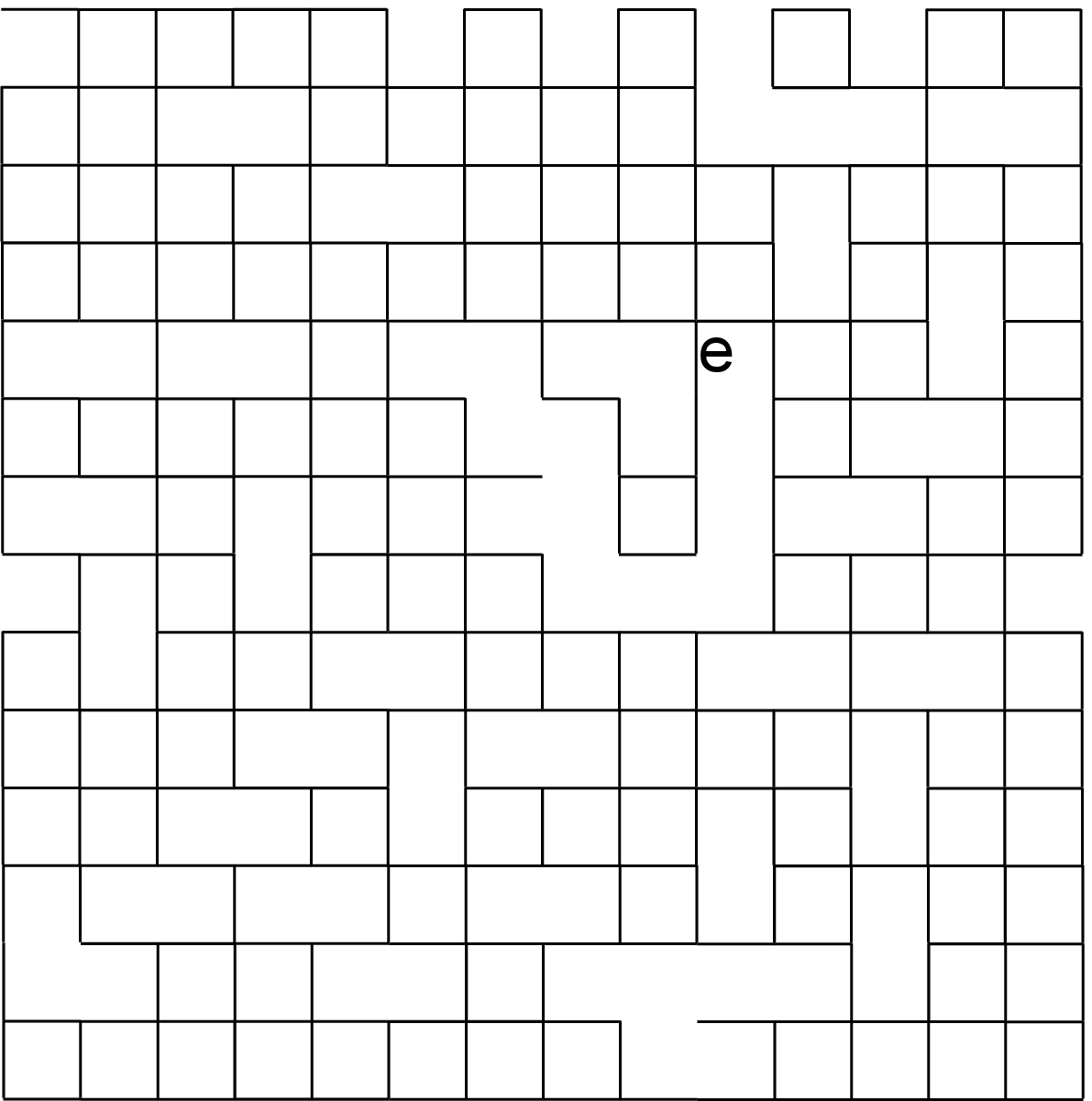}}\\
  \subfloat[Case 4b]{\label{fig:case4b}\includegraphics[width=0.36\textwidth]{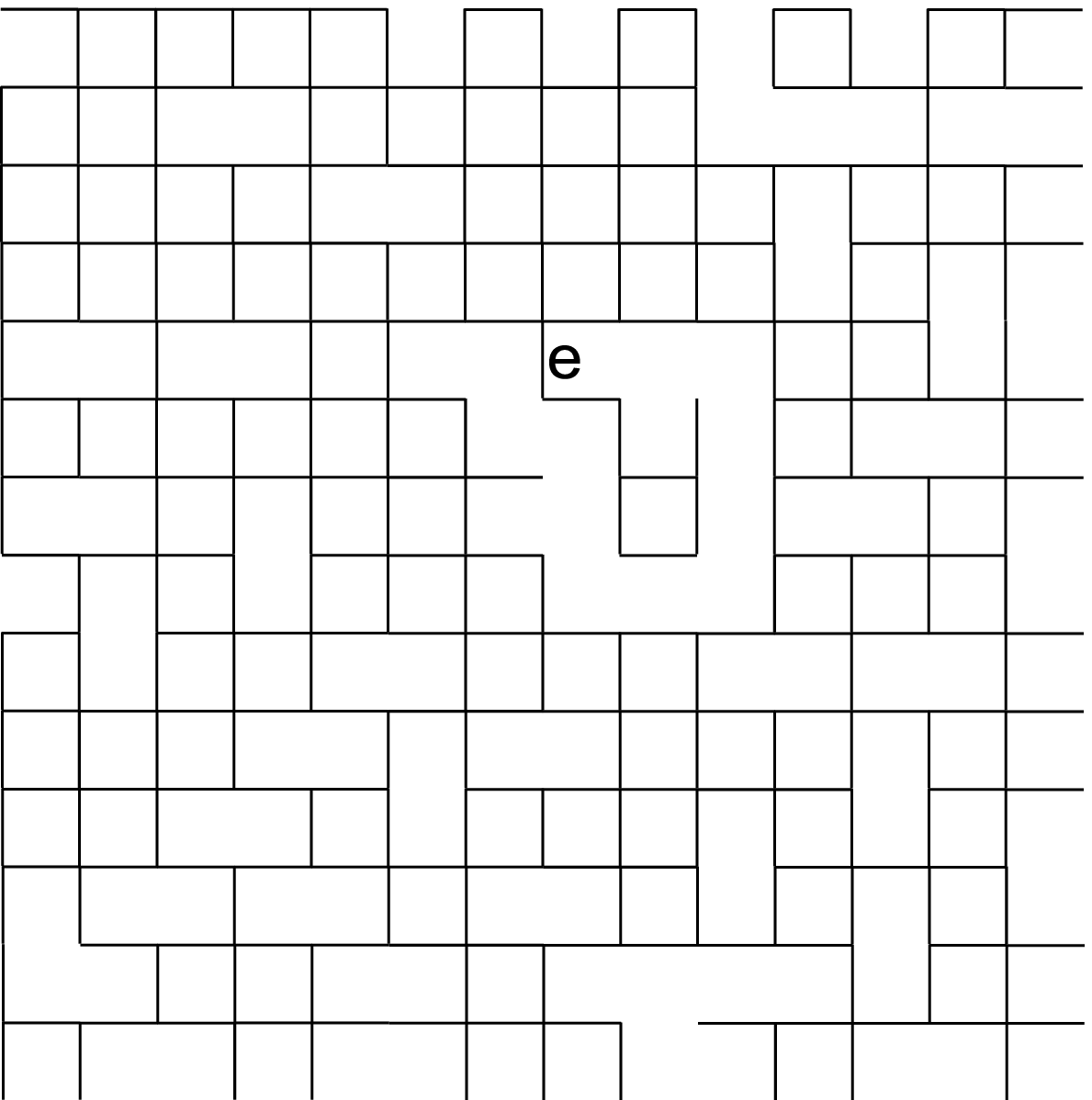}}\qquad
  \subfloat[Case 5]{\label{fig:case5}\includegraphics[width=0.36\textwidth]{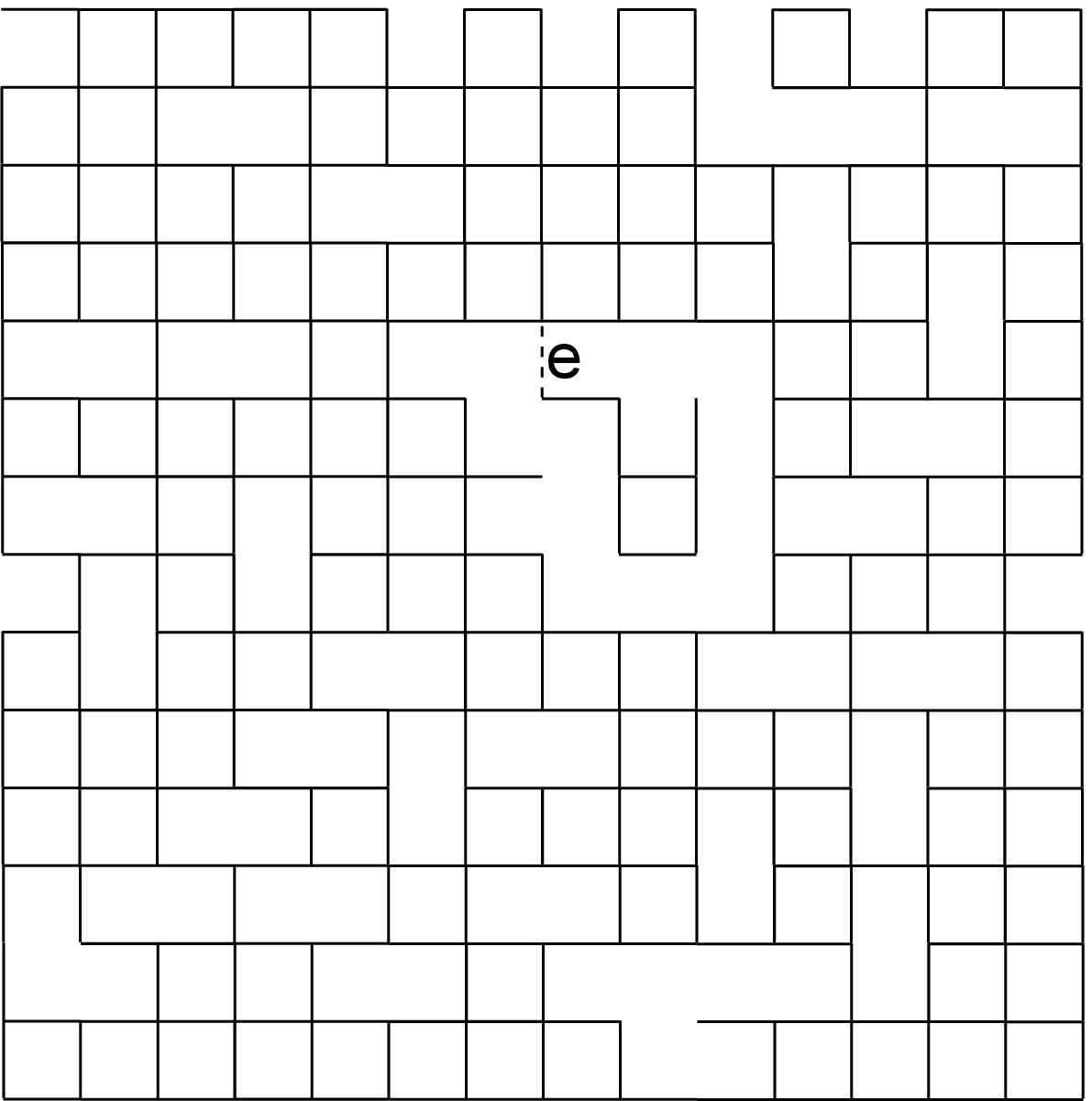}}
  \caption{Illustrations of the different cases}
  \label{fig:Cases}
\end{figure}


\begin{itemize}
\item \textbf{Cases 1 and 2: } In those cases the set $C_d(n)$ and the edges available from it is the same for both configurations $\om$ and $\om^e$. It therefore follows
that $\nabla_e\phi(\om)=0$. See Figure \ref{fig:case1}, and \ref{fig:case2}.
\item \textbf{Case 3:} In this case the set $C_d(n)$ is the same for both configurations $\om$ and $\om^e$, however the set of edges available from $C_d(n)$ is
increased by one when moving to the configuration $\om^e$, see figure \ref{fig:case3}. Fix a set
$A\subset C_d(n)(\om)$ of size bigger than $c_4n^d$ which realize
the Cheeger constant. It follows that
\[
\psi_A(\om)=\phi(\om)\leq \phi(\om^e)\leq \psi_A(\om^e),
\]
and therefore by Lemma \ref{lem:isobigset} we have
\[
|\phi(\om^e)-\phi(\om)|\leq \psi_A(\om^e)-\psi_A(\om)\leq
\frac{1}{c_4 n^d},
\]
as required.
\item \textbf{Case 4:} We separate this case into two subcases
according to the fact weather $C_d(n)(\om)\backslash C_d(n)(\om^e)$
is an empty set or not. If $C_d(n)(\om)\backslash
C_d(n)(\om^e)=\emptyset$ then we are in the same situation as in
{\bf Case 3}, see Figure \ref{fig:case4a}, and so the same argument gives the desired result. So,
let us assume that $C_d(n)(\om)\backslash
C_d(n)(\om^e)\neq\emptyset$, see Figure \ref{fig:case4b}. Since $\om\in H_n$ we know that
\begin{equation}\label{changeinsizeofC}
\begin{aligned}
\left|C_d(n)(\om)\backslash C_d(n)(\om^e)\right|\leq \sqrt{n}.
\end{aligned}
\end{equation}
Since $\om\in H_n^4$ there exists a set $A\subset C_d(n)(\om)$ of size bigger than $c_4 n^d$
realizing the Cheeger constant in the configuration $\om$. We denote
$A_1=A\cap C_d(n)(\om^e)$ and $A_2=A\cap (C_d(n)(\om)\backslash
C_d(n)(\om^e))$. Applying Lemma \ref{lem:twocom} to $A_1$ and $A_2$
we see that
\begin{equation}\label{eq:psia}
\begin{aligned}
\psi_A(\om)=\psi_{A_1\cup A_2}(\om)\geq
\min\{\psi_{A_1}(\om),\psi_{A_2}(\om)\}-\frac{2}{|A|}.
\end{aligned}
\end{equation}
From \eqref{changeinsizeofC} it follows that $|A_2|\leq \sqrt{n}$
and therefore $\psi_{A_2}(\om)\geq \frac{1}{\sqrt{n}}$ which gives
us that $\min\{\psi_{A_1}(\om),\psi_{A_2}(\om)\}=\psi_{A_1}(\om)$.
Indeed, if the last equality doesn't hold then
\[
\frac{c_2}{n}\geq \psi_A(\om)\geq \psi_{A_2}(\om)-\frac{2}{|A|}\geq
\frac{1}{\sqrt{n}}-\frac{2}{c_4 n^d},
\]
which for large enough $n$ yields a contradiction. Consequently from \eqref{eq:psia} we
get that
\[
\psi_{A_1}(\om)-\frac{2}{c_4 n^d}\leq  \phi(\om)\leq
\psi_{A_1}(\om),
\]
and so
\[
\phi(\om^e)-\frac{2}{c_4n^d}\leq
\psi_{A_1}(\om^e)-\frac{2}{c_4n^d}\leq
\psi_{A_1}(\om)-\frac{2}{c_4n^d}\leq \phi(\om),
\]
i.e, $\phi(\om^e)-\phi(\om)\leq \frac{2}{c_4n^d}$.

For the other direction, since $\om\in H_n^5$ there exists a set $B\subset C_d(n)(\om^e)$ of size bigger than
$c_5 n^d$ realizing the Cheeger constant in $\om^e$, then
\[
\phi(\om)\leq \psi_B(\om)\leq
\psi_B(\om^e)+\frac{1}{|B|}=\phi(\om^e)+\frac{1}{|B|}\leq
\phi(\om^e)+\frac{1}{c_5n^d}.
\]
Consequently,
\[
|\phi(\om)-\phi(\om^e)|\leq
\max\left\{\frac{2}{c_4n^d},\frac{1}{c_5n^d}\right\},
\]
as required.
\item \textbf{Case 5: } This case is similar to \textbf{Case 4}, see Figure \ref{fig:case5}. The proof of this case follows the proof of \textbf{case 4} above.
\item \textbf{Case 6: } This case is impossible by the definition of
the set $C_d(n)(\om)$.
\end{itemize}
\end{proof}

Next we turn to estimate the probability of the event $H_n$.

\begin{clm}\label{clm:lotsofconts}
There exist constants $c_1,c_2,c_3,c_4,c_5>0$ and a constant $c>0$ such that for large enough $n\in\BN$ we have
\begin{equation}\begin{aligned}
\prob(H_n^c)\leq  e^{-c \log^{\frac{3}{2}}(n)}.
\end{aligned}\end{equation}
\end{clm}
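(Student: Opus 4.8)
The plan is to bound $\prob(H_n^c)$ by a union bound over the five defining events,
\[
\prob(H_n^c)\le \sum_{i=1}^{5}\prob\big((H_n^i)^c\big),
\]
and to show that each summand is at most $e^{-c\log^{3/2}n}$ for a suitable choice of $c_1,\dots,c_5$. Since the target bound has this same form, combining the five estimates, and the auxiliary union bounds over edges that appear below, costs only a harmless shrinking of the constant $c$, because $n^d e^{-c\log^{3/2}n}\le e^{-(c/2)\log^{3/2}n}$ for large $n$ (here $\log^{3/2}n$ dominates $d\log n$). The event $H_n^2(c_2,c_3)$ is exactly the Mathieu--Remy estimate \cite{mathieu2004isoperimetry} quoted in the introduction, so $\prob((H_n^2)^c)\le e^{-c\log^{3/2}n}$ for appropriate $c_2,c_3$. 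The event $H_n^1(c_1)$ asserts that the giant occupies at least a $c_1$-fraction of the torus; by the classical concentration of the supercritical giant-component volume around $\theta(p)n^d$, choosing $c_1<\theta(p)$ makes $\prob((H_n^1)^c)$ exponentially small in a positive power of $n$, far below the required scale.

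For $H_n^3$ I would combine two inputs. First, the exponential tail $\prob(k\le|C(0)|<\infty)\le e^{-c k^{(d-1)/d}}$ evaluated at $k=\sqrt n/2$, together with a union bound over the $\sim d\,n^d$ vertices, shows that with probability at least $1-n^d e^{-c n^{(d-1)/(2d)}}$ every finite cluster has size at most $\sqrt n/2$, and that the giant is the unique largest component with a macroscopic gap to the second largest. On this event, opening the flipped edge $e$ either leaves the giant unchanged or merges a single finite cluster into it, so the contribution to $|C_d(n)(\om)\triangle C_d(n)(\om^e)|$ from attaching clusters is at most $\sqrt n/2$. Second, closing an edge $e$ internal to the giant can split it into two pieces $C_1,C_2$; the smaller side $C_2$ then satisfies $|\partial_{C_d(n)}C_2|=1$, since $e$ is its only open edge to the rest of the giant and it has no open edges to the complement of the giant. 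If $|C_2|\ge M:=\lceil\log^{3d/(2(d-1))}n\rceil$, this contradicts the isoperimetric lower bound $|\partial S|\ge\alpha|S|^{(d-1)/d}$ of \cite{pete2007note}, which at size $M$ forces $|\partial S|\ge\alpha M^{(d-1)/d}=\alpha\log^{3/2}n>1$. Hence, off an event of probability at most $\exp(-\beta\log^{3/2}n)$, no such macroscopic split occurs, $|C_2|<M<\sqrt n/2$, and a union over the $\sim d\,n^d$ edges keeps $\prob((H_n^3)^c)\le e^{-c\log^{3/2}n}$.

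For $H_n^4(c_4)$ I would first reduce to connected sets: if a Cheeger minimizer $A^\ast$ decomposes into connected pieces $A_i$ in the giant, then $\psi_{A^\ast}\ge\min_i\psi_{A_i}$ by the mediant inequality, so some connected $S$ with $|S|\le|C_d(n)|/2$ realizes $\psi_S=\phi$. With $M$ as above, any connected $S$ with $|S|<M$ has $\partial S\neq\emptyset$ (as $C_d(n)$ is connected), hence $\psi_S\ge 1/M\gg c_3/n$, so on $H_n^2$ it cannot attain $\phi\le c_3/n$; thus $|S|\ge M$ and the isoperimetry event of \cite{pete2007note} applies, giving $c_3/n\ge\phi=\psi_S\ge\alpha|S|^{-1/d}$ and therefore $|S|\ge(\alpha/c_3)^d n^d=:c_4 n^d$. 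Consequently $(H_n^4)^c$ is contained in the union of $(H_n^2)^c$ with the isoperimetry-failure event, both of probability at most $e^{-c\log^{3/2}n}$ once $M$ is chosen so that $\exp(-\beta M^{(d-1)/d})=\exp(-\beta\log^{3/2}n)$. The event $H_n^5(c_5)$ is the same statement applied to each single-edge modification $\om^e$, so it follows identically after a union bound over the $\sim d\,n^d$ edges, using that the push-forward under a single coordinate flip changes probabilities by at most the bounded factor $\max(p,1-p)/\min(p,1-p)$. Assembling the five estimates yields the claim.

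I expect the delicate point to be the $H_n^3$ analysis, specifically ruling out that closing one edge disconnects the giant into two macroscopic pieces; the resolution above routes this through the quenched isoperimetric inequality of \cite{pete2007note} rather than through a separate bottleneck estimate, which is what makes the whole argument reduce to already-available supercritical inputs at the common scale $e^{-c\log^{3/2}n}$.
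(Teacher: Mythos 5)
Your union-bound skeleton and the treatment of $H_n^1$ and $H_n^2$ match the paper, but for the remaining three events you take a genuinely different route, and it essentially works. For $H_n^3$ the paper exploits the observation that $C_d(n)(\om)\triangle C_d(n)(\om^e)$ does not depend on $\om(e)$, so at the cost of a factor $\frac{1}{1-p}$ it may condition on $e$ being closed; a large symmetric difference then forces an open cluster of size at least $\sqrt n$ detached from the giant, and the exponential tail for large finite clusters (Grimmett, Thm.\ 8.18) finishes. This sidesteps entirely the ``closing an edge splits the giant'' scenario that you correctly flag as the delicate point and resolve instead via the quenched isoperimetric inequality of \cite{pete2007note} applied to the smaller piece, whose boundary inside the giant is the single edge $e$. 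For $H_n^4$ the paper uses the Mathieu--Remy functional $I_{\ep(n)}$ with $\ep(n)=d+2d\frac{\log\log n}{\log n}$ rather than $d$-dimensional isoperimetry; your mediant-inequality reduction to connected minimizers plus $\phi=\psi_S\ge\alpha|S|^{-1/d}$ is arguably cleaner and yields the same $c_4$. For $H_n^5$ the paper runs a two-case deterministic argument ($V_n^0$, $V_n^1$) showing the bad event is empty on the intersection of the other good events, whereas you transfer the $H_n^4$ bound to $\om^e$ by a Radon--Nikodym factor $\max(p,1-p)/\min(p,1-p)$ and a union over the $dn^d$ edges; that is simpler and correct, since the polynomial factor is absorbed into $e^{-c\log^{3/2}n}$.

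The one point you must make explicit: the theorem of \cite{pete2007note} as quoted concerns connected subsets of $\mathcal{C}_\infty$ in $\Z^d$, while you apply it to connected subsets of the torus giant $C_d(n)$. On the torus such an inequality can only hold under a volume restriction (take $S=C_d(n)$, which has empty boundary, to see why); your sets do satisfy $|S|\le|C_d(n)|/2$, but the transfer from $\Z^d$ to the torus still needs justification. This is precisely why the authors remark that the improved bounds ``don't improve our result'' and stick with the Mathieu--Remy estimates, which are available in the torus/box setting they need. With that transfer supplied (it is available in \cite{berger2008anomalous}-type arguments), your proof goes through.
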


\begin{proof}
Since $\prob(H_n^c)\leq \sum_{i=1}^{5}\prob((H_n^i)^c)$, it's enough to bound each of the last probabilities.
The proof of the exponential decay of $\prob((H_n^1)^c)$ for appropriate constant is presented in the Appendix.

By \cite{mathieu2004isoperimetry} Theorem 3.1 and section 3.4, there exists a $c>0$ such that for $n$ large enough,
$\prob((H_n^2)^c)\le e^{-c\log^{3/2}n}$ for some constants $c_2,c_3>0$.

Turning to bound $\prob((H_n^3)^c)$, we notice that the set $C_d(n)(\om)\triangle C_d(n)(\om^e)$
is independent of the status of the edge $e$ and therefore
\begin{equation}\begin{aligned}
\prob((H_n^3)^c) &=\frac{1}{1-p}\prob\left(\left\{\om\in\Om
~:~\exists e\in E_d(n)\quad \left|C_d(n)(\om)\triangle
C_d(n)(\om^e)\right|\geq\sqrt{n} ~,~e \text{ is closed}\right\}\right)\\
&\leq \frac{1}{1-p}\prob\left(\left\{\om\in\Om ~:~\exists e\in
E_d(n)\quad \left|C_d(n)(\om)\triangle
C_d(n)(\om^e)\right|\geq\sqrt{n} ~,~e \text{ is closed}\right\}\cap
H_n^1\right)\\&+\frac{1}{1-p}\prob((H_n^1)^c).
\end{aligned}\end{equation}
We already gave appropriate bound for the last term and therefore we
are left to bound the probability of $\left\{\om\in\Om ~:~\exists
e\in E_d(n)\quad \left|C_d(n)(\om)\triangle
C_d(n)(\om^e)\right|\geq\sqrt{n} ~,~e \text{ is closed}\right\}\cap
H_n^1$. Notice that the occurrence of this event implies the
existence of an open cluster of size bigger than $\sqrt{n}$ which is
not connected to $C_d(n)$, and therefore its probability is bounded
by
\[
\prob(F\cap H_n^1):=\prob(\{\exists B,|B|\ge\sqrt{n}, B\text{ is an open cluster that is not connected to }C_d(n)\}\cap H_n^1)
.\]
By \cite{mathieu2004isoperimetry} Appendix B and \cite{grimmett1999percolation} Theorem 8.61, for large enough $n$ we have that,
\begin{equation}\begin{aligned}
\prob(F\cap H_n^1)\le\prob\left(\begin{array}{cc}\exists B,|B|\ge\sqrt{n}, B\text{ is an open cluster that}\\
\text{is not connected to the infinite cluster}\end{array}\right).
\end{aligned}\end{equation}
However the probability of the last event decays exponentially with $n$ by \cite{grimmett1999percolation} Theorem 8.18.

In order to deal with the event $(H_n^4)^c$ we define one last event
\[
G_n=\left\{I_{\ep(n)}(C_d(n))\geq c_6 n^{\frac{d}{\ep(n)}-1}\right\},
\]
where $\ep(n)=d+2d\frac{\log\log n}{\log n}$ and
\begin{equation}
I_{\ep}(C_d(n))=\min_{\begin{array}{ll}&_{\emptyset\neq A\subset C_d(n)}\\&_{|A|\le|C_d(n)|/2}\end{array}}
\frac{|\partial_{C_d(n)}A|}{|A|^{(\ep-1)/(\ep)}}.
\end{equation}

By \cite{mathieu2004isoperimetry} there exists a constant $c>0$ such that for large enough $n\in\BN$
$\prob(G_n^c)<e^{-c\log^\frac{3}{2}n}$. As before we write
\[
\prob((H_n^4)^c)\leq \prob((H_n^4)^c\cap H_n^1\cap H_n^2\cap G_{n})+\prob((H_n^1)^c\cup (H_n^2)^c\cup G_{n}^c),
\]
and by the probability bound mentioned so far it's enough to bound the probability of the first event $(H_n^4)^c\cap H_n^1\cap H_n^2\cap G_{n}$.
What we will actually show is that for appropriate choice of $0<c_4<\frac{1}{2}$ we have
$(H_n^4)^c\cap H_n^1\cap H_n^2\cap G_{n}=\emptyset$. Indeed, since we assumed the event $G_n$ occurs
we have that for large enough $n\in\BN$ and every set $A\subset C_d(n)(\om)$ of size smaller than $c_4 n^d$
\[
|\partial_{C_d(n)}A|\ge c_6
n^{\frac{d}{\ep(n)}-1}|A|^\frac{\ep(n)-1}{\ep(n)}.
\]
It follows that
\begin{equation}
\begin{aligned}
\psi_A \ge c_6 n^{\frac{d}{\ep(n)}-1}\frac{1}{|A|^{1/\ep(n)}}\ge
c_6 n^{\frac{d}{\ep(n)}-1}\frac{1}{c_4^{1/\ep(n)}n^{d/\ep(n)}}=\frac{c_6}{c_4^{1/\ep(n)}n}.
\end{aligned}
\end{equation}
Choosing $c_4>0$ such that for large enough $n\in\BN$ we have $\frac{c_6}{c_4^{1/\ep(n)}}>c_3$,
we get a contradiction to the event $H_n^2$, which proves that the event is indeed empty.

Finally we turn to deal with the event $(H_n^5)^c$. As before it's
enough to bound the probability of the event $(H_n^5)^c\cap
H_n^1\cap H_n^2\cap H_n^3\cap H_n^4 \cap G_{n}$. We divide the last
event into two disjoint events according to the status of the edge
$e$, namely
\begin{equation}\begin{aligned}
V_n^0:&=(H_n^5)^c\cap H_n^1\cap H_n^2\cap H_n^3\cap H_n^4 \cap G_{n}\cap \{\om(e)=0\}\\
V_n^1:&=(H_n^5)^c\cap H_n^1\cap H_n^2\cap H_n^3\cap H_n^4 \cap
G_{n}\cap \{\om(e)=1\},
\end{aligned}\end{equation}
and will show that for right choice of $c_5>0$ both $V_n^0$ and
$V_n^1$ are empty events.

Let us start with $V_n^0$. Going back to the proof of Claim
\ref{clm:nabphi} one can see that under the event $H_n^1\cap
H_n^2\cap H_n^3\cap H_n^4$ there exists a constant $c>0$ such that
\begin{equation}\begin{aligned}\label{eq:Cheeger_om^e}
\phi(\om^e)\leq \phi(\om)+\frac{c}{n^d}\leq
\frac{c_3}{n}+\frac{c}{n^d},
\end{aligned}\end{equation}
and therefore $\phi(\om^e)\leq \frac{\tilde{c}_3}{n}$ for any
$\tilde{c}_3>c_3$ and $n\in\BN$ large enough. If $\emptyset\neq
A\subset C_d(n)(\om^e)$ is a set of size smaller than
$\frac{n}{\tilde{c}_3}$ then
\begin{equation}
\psi_A(\om^e)\geq \frac{1}{|A|}>\frac{\tilde{c}_3}{n},
\end{equation}
and therefore $A$ cannot realize the Cheeger constant. On the other
hand, if $A\subset C_d(n)(\om^e)$ satisfy $\frac{n}{\tilde{c}_3}\leq
|A|\leq c_5 n^d$ then
\[
|\partial_{C_d(n)(\om^e)}A|\geq |\partial_{C_d(n)(\om^e)}(A\cap
C_d(n)(\om))|-1 \geq |\partial_{C_d(n)(\om)}(A\cap C_d(n)(\om)|-2,
\]
and therefore (Since we assumed the event $G$ occurs)
\begin{equation}\begin{aligned}
\psi_A(\om^e)&\geq \frac{|\partial_{C_d(n)(\om)}(A\cap
C_d(n)(\om))|-2}{|A|}\\
&\geq c_6 n^{d/\ep(n)-1}\frac{|A\cap C_d(n)(\om)|}{|A|}-\frac{2}{|A|}
\geq \frac{c_6}{2c_5^{1/\ep(n)}n}-\frac{2\tilde{c}_3}{n}.
\end{aligned}\end{equation}
Taking $c_5>0$ small enough such that $\frac{c_6}{2c_5^{1/\ep(n)}}-2\tilde{c}_3>\tilde{c}_3$ we get a contradiction to \eqref{eq:Cheeger_om^e}.
It follows that no set $A\subset C_d(n)(\om^e)$ of size smaller than $c_5 n^d$ can realize the Cheeger constant which contradicts $(H_n^5)^c$, i.e,
$V_n^0=\emptyset$.

Finally, for $V_n^1$. The case $A\subset C_d(n)(\om^e)$ such that $|A|< \frac{n}{\tilde{c}_3}$
is the same as for the event $V_n^0$. If $A\subset C_d(n)(\om^e)$ satisfy $\frac{n}{\tilde{c}_3}\leq |A|\leq c_5 n^d$
then
\[
|\partial_{C_d(n)(\om^e)}A|\geq |\partial_{C_d(n)(\om)}A|-1.
\]
and therefore as in the case of $V_n^0$
\begin{equation}\begin{aligned}
\psi_A(\om^e)&\geq \frac{|\partial_{C_d(n)(\om)}A|-1}{|A|}\\
&\geq c_6 n^{d/\ep(n)-1}\frac{|\partial_{C_d(n)(\om)}A|}{|A|}-\frac{1}{|A|} \geq \frac{c_6}{2c_5^{1/\ep(n)}n}-\frac{\tilde{c}_3}{n}.
\end{aligned}\end{equation}
Choosing $c_5$ small enough, we again get a contradiction to \eqref{eq:Cheeger_om^e}.
and as before this yields that $V_n^1=\emptyset$.
\end{proof}

\begin{proof}[Proof of theorem \ref{thm:main}]
By \cite{talagrand1994russo} (Theorem 1.5) the following inequality holds for some $K=K(p)$,
\begin{equation}
\begin{aligned}
\var(\phi)\leq K\cdot \sum_{e\in E(C_d(n))}\frac{\|\nabla_e\phi\|_2^2}{1+\log\left({\|\nabla_e\phi\|_2}/{\|\nabla_e\phi\|_1}\right)}.
\end{aligned}
\end{equation}

Where $\|\nabla_e\phi\|_2^2=\ev[(\nabla_e\phi)^2]$ and $\|\nabla_e\phi\|_1=\ev[|\nabla_e\phi|]$. Observe that $\|\nabla_e\phi\|_1=\|\nabla_e\phi\ind_{\{\nabla_e\phi\neq 0\}}\|_1\le
\|\nabla_e\phi\|_2\|\ind_{\{\nabla_e\phi\neq 0\}}\|_2$, and therefore
\[
\frac{\|\nabla_e\phi\|_2}{\|\nabla_e\phi\|_1}\ge\frac{1}{\sqrt{\pr(\nabla_e\phi\neq0)}}\ge1
.\]
Consequently,
if we fix some edge $e_0\in\BE_d(n)$,
\begin{equation}\label{eq:main_thm_ineq}
\begin{aligned}
\var(\phi)\leq K\sum_{e\in E(C_d(n))}\|\nabla_e\phi\|_2^2=K|\BE_d(n)|\cdot \|\nabla_{e_0}\phi\|_2^2=Kdn^d\cdot\|\nabla_{e_0}\phi\|_2^2.
\end{aligned}
\end{equation}
where the first equality follows from the symmetry of $\BT_d(n)$.
\begin{equation}
\begin{aligned}
\|\nabla_{e_0}\phi\|_2^2=\ev[|\nabla_{e_0}\phi|^2\ind_{H_n}]+\ev[|\nabla_{e_0}\phi|^2\ind_{H_n^c}].
\end{aligned}
\end{equation}
Notice that since $|\nabla_{e_0}\phi|\le4d$ we have $\ev[|\nabla_{e_0}\phi|^2\ind_{H_n^c}]\le16d^2\pr(H_n^c)$. Thus applying Lemma \ref{clm:lotsofconts},
\begin{equation}\label{eq:nabsmall}
\begin{aligned}
\ev[|\nabla_{e_0}\phi|^2\ind_{H_n^c}]\le16d^2e^{-c
\log^{\frac{3}{2}}(n)} ,
\end{aligned}
\end{equation}
and by Lemma \ref{clm:nabphi}
\begin{equation}\label{eq:evnab}
\begin{aligned}
\ev[|\nabla_{e_0}\phi|^2\ind_{H_n}]\le\frac{C^2}{n^{2d}} .
\end{aligned}
\end{equation}
Thus combing equations \eqref{eq:nabsmall} and \eqref{eq:evnab} with
equation \eqref{eq:main_thm_ineq} the result follows.
\end{proof}

\section{Appendix}\label{sec:appendix}
In this Appendix for completeness and future reference  we sketch a proof of the
exponential decay of $\prob((H_n^1)^c)$. The proof follows directly from two
papers \cite{deuschel1996surface} by Deuschel and Pistorza and \cite{antal1996chemical} by Antal Pisztora,
which together gives a proof by a renormalization argument. We borrow the terminology of \cite{antal1996chemical} without giving here the definitions.
\begin{lem}
Let $p>p_c(\Z^d)$. There exists a $c_1,c>0$ such that for $n$ large enough \[\prob_p(|C_d(n)(\om)|<c_1 n^d)<e^{-cn}.\]
\begin{proof}
By \cite{deuschel1996surface} Theorem 1.2, there exists a
$p_c(\Z^d)<p^*<1$ such that for every $p>p^*$,
$\prob_p(|C_d(n)(\om)|< \tilde{c}_1n^d)<e^{-cn}$. Since $\{|C_d(n)(\om)|<\tilde{c}_1
n^d\}^c$ is an increasing event, by Proposition 2.1 of
\cite{antal1996chemical} for $N\in\BN$ large enough, i.e such that
$\bar{p}(N)>p^*$,
\begin{equation}
\begin{aligned}
\pr_N(|C_d(n)(\om)|<\tilde{c}_1
n^d)\le\pr^*_{\bar{p}(N)}(|C_d(n)(\om)|<\tilde{c}_1 n^d)<e^{-cn},
\end{aligned}
\end{equation}
where $\BP_N$ is the probability measure of the renormalized
dependent percolation process and $\BP_{\bar{p}(N)}^*$ is the
probability measure of standard bond percolation with parameter
$\bar{p}(N)$.  From the definition of the event $R_i^{(N)}$, the
crossing clusters of all the boxes $B_i'$ that admit $R_i^{(N)}$ are
connected, thus
\[
\prob_p\left(|C_d(n)(\om)|<\frac{\tilde{c}_1}{N^d} n^d \right)<e^{-cn}.\]
\end{proof}
\end{lem}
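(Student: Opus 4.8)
The plan is to establish the exponential lower bound on the giant component by a renormalization (coarse-graining) argument that reduces the general supercritical regime $p>p_c(\Z^d)$ to the regime of percolation density close to one, where the desired estimate is available directly. The point is that a direct surface-order large deviation bound is only known near $p=1$, while the renormalized block process behaves like percolation of density tending to $1$ for \emph{any} fixed $p>p_c(\Z^d)$.

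First I would record the input near $p=1$: by Deuschel and Pisztora \cite{deuschel1996surface} (Theorem 1.2) there exist a threshold $p_c(\Z^d)<p^*<1$ and a constant $\tilde c_1>0$ such that for every $p>p^*$ and $n$ large one has $\prob_p(|C_d(n)|<\tilde c_1 n^d)<e^{-cn}$. The obstruction is that this covers only densities above $p^*$, whereas the lemma must hold for every $p>p_c(\Z^d)$, possibly far from $1$.

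To bridge this gap I would pass to the renormalized process of \cite{antal1996chemical}. Partition $\BT^d(n)$ into boxes of side $N$ and declare a block \emph{good} when it admits the event $R_i^{(N)}$, i.e.\ it carries a crossing cluster that links up with the crossing clusters of its neighbouring boxes. By Antal--Pisztora (Proposition 2.1), for each fixed $p>p_c(\Z^d)$ the good-block field stochastically dominates an i.i.d.\ Bernoulli field of density $\bar p(N)$ on the renormalized torus $\BT^d(n/N)$, with the crucial property $\bar p(N)\to 1$ as $N\to\infty$. I would then fix $N$ once and for all, large enough that $\bar p(N)>p^*$.

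The final step combines the two. Since $\{|C_d(n)|\ge \tilde c_1 n^d\}$ is an increasing event, the stochastic domination gives $\pr_N(\,\cdot\,)\le \pr^*_{\bar p(N)}(\,\cdot\,)$ for the complementary (decreasing) event at the block level, and Deuschel--Pisztora applied at density $\bar p(N)>p^*$ bounds this by $e^{-c(n/N)}=e^{-c'n}$, the constant $N$ being absorbed into the rate. Because the crossing clusters of good blocks lying in a common block-component are all connected in the original graph, a block-component containing at least $\tilde c_1 (n/N)^d$ good blocks forces an original open cluster of at least $\tilde c_1 (n/N)^d=(\tilde c_1/N^d)\,n^d$ vertices; setting $c_1=\tilde c_1/N^d$ then yields the claim. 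I expect the main obstacle to be the renormalization bookkeeping itself---verifying the stochastic-domination comparison for the dependent good-block field and checking that block-level connectivity genuinely lifts to connectivity in the original lattice---which is exactly the machinery we import from \cite{antal1996chemical}.
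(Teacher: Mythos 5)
Your proposal is correct and follows essentially the same route as the paper's own argument: invoke Deuschel--Pisztora's Theorem 1.2 for densities above $p^*$, use Antal--Pisztora's Proposition 2.1 to stochastically dominate the renormalized good-block field by Bernoulli percolation of density $\bar p(N)>p^*$, and then use the connectivity of the crossing clusters of blocks satisfying $R_i^{(N)}$ to transfer the block-level bound back to the original lattice with $c_1=\tilde c_1/N^d$. Your write-up actually spells out the logic (in particular the monotonicity and the lifting of block connectivity) slightly more explicitly than the paper's sketch.
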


\bibliography{ns}
\bibliographystyle{alpha}
\end{document}